\newtheorem{theorem}{Theorem}[section]
\newtheorem{lemma}[theorem]{Lemma}
\newtheorem{proposition}[theorem]{Proposition}
\newtheorem{corollary}[theorem]{Corollary}
\theoremstyle{definition}
\newtheorem{definition}[theorem]{Definition}
\newtheorem{example}[theorem]{Example}
\newtheorem{obs}[theorem]{Observation}
\theoremstyle{definition}
\numberwithin{equation}{section}
\tikzset{
    ncbar angle/.initial=90,
    ncbar/.style={
        to path=(\tikztostart)
        -- ($(\tikztostart)!#1!\pgfkeysvalueof{/tikz/ncbar angle}:(\tikztotarget)$)
        -- ($(\tikztotarget)!($(\tikztostart)!#1!\pgfkeysvalueof{/tikz/ncbar angle}:(\tikztotarget)$)!\pgfkeysvalueof{/tikz/ncbar angle}:(\tikztostart)$)
        -- (\tikztotarget)
    },
    ncbar/.default=0.5cm,
}
\tikzset{square left brace/.style={ncbar=0.5cm}}
\tikzset{square right brace/.style={ncbar=-0.5cm}}
\tikzset{round left paren/.style={ncbar=0.5cm,out=120,in=-120}}
\tikzset{round right paren/.style={ncbar=0.5cm,out=60,in=-60}}
\DeclareMathOperator{\id}{id}
\DeclareMathOperator{\Hom}{Hom}
\DeclareMathOperator{\Obj}{Obj}
\def \Gph {\mathsf{Gph}}
\def \fGph {\mathsf{FGph}}
\def \HoGph {\mathsf{hFGph}}
\def \sGph {\mathsf{stGph}}
\def \into {\hookrightarrow}
\def \con {\sim}
\definecolor{laura}{rgb}{.4, 0, .6}
\begin{document}

\title{A Homotopy Category for Graphs
}
%



\author{Tien Chih         \and
        Laura Scull 
}

\maketitle

\begin{abstract}
We show that the category of graphs has the structure of a 2-category with homotopy as the 2-cells.  We then develop an explicit description of homotopies for finite graphs, in terms of what we call `spider moves'.    We then create a  category by modding out by the 2-cells of our 2-category, and use the spider moves to show that for finite  graphs, this category is a homotopy category in the sense that it  satisfies the universal property for localizing homotopy equivalences.  We then show that finite stiff graphs form a skeleton of this homotopy category. 
\end{abstract}

\section{Introduction}\label{S:Intro}
Homotopy traditionally studies continuous transformations of spaces and maps between them.  Translating such a fundamentally continuous concept into a discrete setting such as graphs can be approached in several ways.  The first strategy used was to create a 'Hom complex', a polyhedral complex which represents information about the morphisms between two graphs.  This complex can then be turned into a topological space, and the homotopy of this space encodes information about homotopy of graphs  \cite{HomTG, MoreFolding, Kosolov1,Kosolov2, Kosolov3, KosolovShort, ProofLovasz}.  More recently, Dochtermann has shown that it is possible to define a homotopy for graphs, called $\times$- homotopy, using only categorical constructions inside of graphs, and get the same homotopy theory as that provided by simplicial techniques  \cite{Docht1}. Others have since developed results strictly within the graph category \cite {Docht2, Demitri, Droz, NoModel}.    We follow this second approach and study $\times$-homotopy, working strictly with graphs and discrete constructions.

In topological spaces, the existence of the homotopies and their structure gives rise to a 2-categorical structure on spaces, in which the homotopies form 2-cells.  In this paper, we show that the category of graphs also has the structure of a 2-category with homotopies of morphisms as the 2-cells, and verify the necessary conditions.  We then develop an explicit description of homotopy for finite graphs, based around our notion of 'spider moves'.  Our spider moves can be seen as a generalization of the idea of folds, which have been linked to  homotopy of graphs  by \cite{BonatoCaR,GMDG,HN2004}.    We then define a quotient category of our 2-category, and use our spider moves to show that this forms a homotopy category for finite graphs  in the sense that it  satisfies the universal property for localization of homotopy equivalences.  Such a  localization is often created via a Quillen model category, which offers extra structure for working with the homotopy category that is created.  The existence  of model structures for the category of graphs has been studied by \cite{Droz, BoxHomotopy}, and a number of  different model structures have been defined  which localize with respect to various notions of graph homotopy.  They do not produce a model structure for the $\times$-homotopy that we are studying, and in fact it is shown in \cite{NoModel} that no such model stucture exists that has some subclass of inclusions as cofibrations, ruling out the most natural attempt to generalize existing model structures in other areas of mathematics.    Here, we simply provide a direct construction of the localized homotopy category without a model structure.    To give some handle on the structure of the localized category, we show that the subcategory formed by stiff graphs forms a skeleton of our homotopy category, and hence the stiff graphs give canonical representatives for finite graphs up to homotopy.

We begin in Section \ref{S:BD} by reviewing the basic definitions and properties of the graph category, including products, exponential objects and walks and their concatenations following \cite{Demitri, AGT, HN2004, Docht1}.  In Section \ref{S:HT}, we  establish that the category of graphs forms a 2-category.  In Section \ref{S:Struct} we give a concrete description of the structure of a homotopy of graph morphisms, showing that a  homotopy with finite domain can be broken down into a sequence of simple 'spider moves' which move only one vertex at a time.    In Section \ref{S:HoCat} we  use our spider moves from Section \ref{S:Struct} to show that the quotient of the 2-category constructed is a categorical homotopy category for finite graphs in the sense that it  satisfies the universal property for localization of homotopy equivalences.  In Section \ref{S:skel}, we show that the finite stiff graphs form a skeleton for the new homotopy category  and briefly discuss what can be said about  the structure of this skeleton in the absence of any model categorical infrastructure.

\section{Background}\label{S:BD}
In this section, we give background definitions and notations.   We include some basic results which seem like they should be standard, but we were unable to find specific references in the literature, so we include them here for completeness.  We will use standard graph theory definitions and terminology following \cite{Bondy, AGT, HN2004}, and category theory definitions and terminology from \cite{riehlCTIC, Mac}.

\subsection{The Graph Category} We work in the category $\Gph$ of finite undirected graphs, where we allow at most one edge connecting any pair of vertices.  We do allow a (single) loop connecting a vertex to itself.    

\begin{definition} \cite{HN2004} The category of graphs $\Gph$ is defined by: \begin{itemize} \item An object is a  graph $G$,  consisting of a set of vertices $V(G) = \{ v_{\lambda}\}$ and a set $E(G)$ of edges connecting them, where each edge is given by an unordered set of two vertices.      If two vertices are connected by an edge, we will use notation  $v_1 \con v_2 \in E(G)$, or just $v_1 \con v_2$ if the parent graph is clear.   
\item 
An arrow in the category   $\Gph$ is a graph morphism $f:  G \to H$.  Specifically, this is  given by a set map $f:  V(G) \to V(H)$ such that  if $v_1 \con v_2 \in E(G)$ then $f(v_1) \con f(v_2) \in E(H)$.  \end{itemize} \end{definition}  

We will work in this category throughout this paper, and assume that 'graph' always refers to an object in $\Gph$.    When we have an invertible graph morphism $f:  G \to H$  we will say that $G$ and $H$ are isomoprhic and write $G \cong H$.

\begin{definition}\label{D:im}\cite{Demitri} 
Given a homomorphism $f:G\to H$, we define the image $ Im(f)$ to be the subgraph of  $H$ where $V(Im(f))=\{f(v):v\in G\} $ and $ E(Im(f))=\{f(v)\con f(w):v \con w\in E(G)\}$.  Thus we specifically consider $Im(f)$ to contain only edges which are images of edges in $G$.  
\end{definition}

\begin{definition}\label{D:prod} \cite{HN2004, Demitri} The (categorical) {\bf product graph} $G \times H$ is defined by: \begin{itemize}
\item  A vertex is a  pair $(v, w)$ where $v \in V(G)$ and $w \in V(H)$.
\item An edge is defined by $(v_1, w_1) \con (v_2, w_2) \in E(G \times H) $ for $v_1 \con v_2 \in E(G)$ and $w_1 \con w_2 \in E(H)$.   \end{itemize}
\end{definition}

\begin{example}\label{E:examprod}

Let $G$ be the graph on two adjacent looped vertices:  $V(G) = \{ 0, 1\}$ and $E(G) = \{ 0\con 0,1 \con 1, 0 \con 1\}$.  Let  $H= K_2 $ with $V(H) = \{ a, b\} $ and $E(H) = \{ a\con b\}$.   Then $G \times H$ is isomorphic to the cyclic graph $C_4$:

$$\begin{tikzpicture}

\draw[fill] (1,.5) circle (2pt);
\draw (1,.5) --node[below]{$a$} (1,.5);
\draw[fill] (2,.5) circle (2pt);
\draw (2,.5) --node[below]{$b$} (2,.5);
\draw (1,.5) -- (2,.5);

\draw[fill] (.5,1) circle (2pt);
\draw (.5,1) --node[left]{0} (.5,1);
\draw[fill] (.5,2) circle (2pt);
\draw (.5,2) --node[left]{1} (.5,2);
\draw (.5,1) -- (.5,2);
\draw (.5,1)  to[in=-40,out=220,loop, distance=.7cm] (.5,1);
\draw (.5,2)  to[in=-220,out=40,loop, distance=.7cm] (.5,2);

\draw[fill] (1,1) circle (2pt);
\draw (1,1) --node[below]{\tiny $(0,a)$} (1,1);
\draw[fill] (1,2) circle (2pt);
\draw (2,1) --node[below]{\tiny $(0,b)$} (2,1);
\draw[fill] (2,1) circle (2pt);
\draw (1,2) --node[above]{\tiny $(1,a)$} (1,2);
\draw[fill] (2,2) circle (2pt);
\draw (2,2) --node[above]{\tiny $(1,b)$} (2,2);

\draw (1,1) -- (2,2);
  \draw (1,1) -- (2,1);
\draw (1,2) -- (2,1);
\draw (1,2) -- (2,2);

\end{tikzpicture}$$

\end{example}

\begin{lemma} \cite{HN2004} \label{L:incl}  If $w \in V(H)$ is looped, i.e. $w \con w \in E(H)$, then there is an inclusion $G \to G \times H$ given by $v \rightarrow (v, w)$ which is a graph morphism. 
\end{lemma}

\begin{proof} If  $w$ is looped then  $v \con v'$ in $G$ if and only if $(v, w) \con (v', w)$ in $G \times H$.    Thus  the subgraph $G \times \{w\}$ is isomorphic to $G$. 

\end{proof}

\begin{definition}\label{D:exp} \cite{Docht1}
The {\bf exponential graph} $ H^G$ is defined by: \begin{itemize} \item  A vertex in  $V(H^G) $ is a {\bf set} map $V(G) \to V(H)$ [not necessarily a graph morphism]. \item There is an edge   $f \con g$ if  whenever $v_1 \con v_2 \in E(G)$, then $f(v_1) \con g(v_2) \in E(H)$. \end{itemize}     \end{definition} 

\begin{example}\label{E:examexp}
Let $G$ and $H$ be the following graphs:

$$\begin{tikzpicture} 
\node at (-.5,0){$G=  $\,};

\draw[fill] (0,0) circle (2pt);
\draw (0,0) --node[below]{0} (0,0);
\draw (0,0)  to[in=50,out=140,loop, distance=.7cm] (0,0);

\draw[fill] (1,0) circle (2pt);
\draw (1,0) --node[below]{1} (1,0);

\draw (0,0) -- (1,0);
\end{tikzpicture}  
\hspace{.5cm}
\begin{tikzpicture} 
\node at (-.5,0){$H=$  \, };

\draw[fill] (0,0) circle (2pt);
\draw (0,0) --node[below]{$a$} (0,0);
\draw (0,0)  to[in=50,out=140,loop, distance=.7cm] (0,0);

\draw[fill] (1,0) circle (2pt);
\draw (1,0) --node[below]{$b$} (1,0);

\draw[fill] (2,0) circle (2pt);
\draw (2,0) --node[below]{$c$} (2,0);
\draw (2,0)  to[in=50,out=140,loop, distance=.7cm] (2,0);

\draw (0,0) -- (1,0) -- (2,0);
\end{tikzpicture}$$
Then the exponential graph $H^G$ is illustrated below, where the row indicates the image of $0$ and the column the image of $1$.  So for example the vertex in the $(a, c)$ spot represents the vertex map $f(0)=a, f(1)=c$. 
$$\begin{tikzpicture}

\node at (0,-.5){$a$};
\node at (1,-.5){$b$};
\node at (2,-.5){$c$};
\node at (1,-1){$0$};

\node at (-.5, 0){$a$};
\node at (-.5, 1){$b$};
\node at (-.5, 2){$c$};
\node at (-1, 1){$1$};

\draw[fill] (0,0) circle (2pt);
\draw[fill] (1,0) circle (2pt);
\draw[fill] (2,0) circle (2pt);
\draw[fill] (0,1) circle (2pt);
\draw[fill] (1,1) circle (2pt);
\draw[fill] (2,1) circle (2pt);
\draw[fill] (0,2) circle (2pt);
\draw[fill] (1,2) circle (2pt);
\draw[fill] (2,2) circle (2pt);

\draw (0,0)  to[in=180,out=270,loop, distance=.7cm] (0,0);
\draw (0,1)  to[in=50,out=140,loop, distance=.7cm] (0,1);
\draw (2,1)  to [in=270,out=0,loop, distance=.7cm](2,1);
\draw (2,2)  to[in=50,out=140,loop, distance=.7cm] (2,2);

\draw (0,0) -- (0,1);
\draw (0,0) -- (1,0);
\draw (0,0) -- (1,1);
\draw (0,2) -- (1,0);
\draw (0,2) -- (1,1);
\draw (0,0) -- (0,1);
\draw (1,1) -- (2,0);
\draw (1,1) -- (2,2);
\draw (1,2) -- (2,0);
\draw (1,2) -- (2,2);
\draw (2,1) -- (2,2);


\end{tikzpicture}$$
\end{example}

\begin{obs} \label{O:morph}  If $f$ is looped in $H^G$,  this means exactly that if  $v_1 \con v_2 \in E(G)$, then $f(v_1) \con f(v_2) \in E(H)$. Thus a set map $f:  V(G) \to V(H)$ is a graph morphism if and only if $f \con f \in E(G^H)$.  \end{obs}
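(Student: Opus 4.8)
The plan is to prove this by directly unwinding Definition \ref{D:exp} in the degenerate case where the two endpoints of an edge coincide. Recall that the vertices of $H^G$ are precisely the set maps $f \colon V(G) \to V(H)$, and that Definition \ref{D:exp} posits an edge $f \con g$ exactly when, whenever $v_1 \con v_2 \in E(G)$, we have $f(v_1) \con g(v_2) \in E(H)$. The whole observation is then a side-by-side comparison of two definitions, so I expect the argument to be short.

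First I would specialize the edge condition of $H^G$ to the case $g = f$. A loop at $f$, that is $f \con f \in E(H^G)$, reads: for every $v_1 \con v_2 \in E(G)$ we have $f(v_1) \con f(v_2) \in E(H)$. This is exactly the first assertion of the observation, namely that being looped in the exponential graph means precisely this implication on edges. Next I would observe that this implication is verbatim the defining property of a morphism in $\Gph$: a set map $f \colon V(G) \to V(H)$ is a graph morphism precisely when $v_1 \con v_2 \in E(G)$ forces $f(v_1) \con f(v_2) \in E(H)$. Comparing the two conditions yields the biconditional: $f$ is a graph morphism if and only if $f \con f$ is a loop in the exponential graph.

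There is essentially no obstacle here, since the content is purely definitional. The only points requiring care are bookkeeping: one must use the loop convention correctly, so that "$f$ is looped" is interpreted as $f \con f \in E(H^G)$, and one must note that identifying the two maps in Definition \ref{D:exp} collapses the edge quantifier "whenever $v_1 \con v_2 \in E(G)$" into exactly the morphism condition. (I would also flag that the final display in the statement ought to read $f \con f \in E(H^G)$ rather than $E(G^H)$, since $f$ is a vertex of $H^G$; this appears to be a harmless transposition of base and exponent.)
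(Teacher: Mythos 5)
Your proposal is correct and is exactly the argument the paper intends: the observation is purely definitional, and the paper offers no separate proof beyond the unwinding of Definition~\ref{D:exp} at $g = f$ that you carry out. Your note that the final condition should read $f \con f \in E(H^G)$ rather than $E(G^H)$ is also right; it is a typo in the paper's statement.
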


\begin{lemma} \label{L:comp1}  If $\phi:  H \to K$ is a graph morphism and $f \con g  \in E( H^G)$ then  $\phi f \con \phi g \in E(K^G)$.     So $\phi$ induces a graph morphism $\phi_*:  H^G \to K^G$.  

\end{lemma}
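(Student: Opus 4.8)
The plan is to unwind the definition of an edge in the exponential graph directly. Recall from Definition \ref{D:exp} that $f \con g \in E(H^G)$ means precisely that for every edge $v_1 \con v_2 \in E(G)$ we have $f(v_1) \con g(v_2) \in E(H)$. I want to establish the corresponding statement for the composite set maps $\phi f, \phi g : V(G) \to V(K)$, namely that for every $v_1 \con v_2 \in E(G)$ we have $(\phi f)(v_1) \con (\phi g)(v_2) \in E(K)$, which is exactly the condition for $\phi f \con \phi g \in E(K^G)$.

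First I would fix an arbitrary edge $v_1 \con v_2 \in E(G)$. Applying the hypothesis $f \con g \in E(H^G)$ to this edge yields $f(v_1) \con g(v_2) \in E(H)$. Then, since $\phi : H \to K$ is a graph morphism and hence carries edges to edges, we obtain $\phi(f(v_1)) \con \phi(g(v_2)) \in E(K)$. Rewriting this as $(\phi f)(v_1) \con (\phi g)(v_2) \in E(K)$ and noting that $v_1 \con v_2$ was arbitrary, the defining condition for an edge of $K^G$ is satisfied, so $\phi f \con \phi g \in E(K^G)$.

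For the second assertion, I would define $\phi_* : H^G \to K^G$ on vertices by $\phi_*(f) = \phi f$; this is a well-defined set map $V(H^G) \to V(K^G)$ since the composite of set maps is again a set map. The first part of the lemma is then exactly the statement that $\phi_*$ sends the edge $f \con g$ to the edge $\phi_*(f) \con \phi_*(g)$, which is precisely the requirement for $\phi_*$ to be a graph morphism.

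I do not expect a genuine obstacle here, since the argument is a direct chase through the definitions. The one point that warrants care is the apparently asymmetric form of the edge relation in Definition \ref{D:exp}: the clause is phrased for the ordered pair $(v_1, v_2)$, but because edges of $G$ are unordered, applying it to the reversed edge $v_2 \con v_1$ automatically supplies the companion condition $f(v_2) \con g(v_1) \in E(H)$. I would keep this in mind so that the verification of the edge condition for $\phi f \con \phi g$ is genuinely symmetric and no orientation issue is quietly overlooked.
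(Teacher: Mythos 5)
Your proof is correct and follows essentially the same route as the paper's: fix an edge $v_1 \con v_2 \in E(G)$, use the hypothesis $f \con g \in E(H^G)$ to get $f(v_1) \con g(v_2) \in E(H)$, and apply $\phi$ to land in $E(K)$. Your closing remark about the unordered nature of edges making the condition genuinely symmetric is a sensible point of care, but it does not change the argument, which matches the paper's proof step for step.
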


\begin{proof}  Suppose that $f \con g \in E(G^H)$.  So for any $v_1 \con v_2 \in E(G)$,  we know that $f(v_1) \con g(v_2) \in E(H)$.  Since $\phi$ is a graph morphism,  $\phi(f(v_1)) \con  \phi(g(v_2)) \in E(K)$.  So $\phi f \con \phi g$.    

\end{proof}

\begin{lemma} \label{L:comp2}  If $\psi:  K \to G$ is a graph morphism and $f \con g \in  E(H^G)$ then  $ f \psi \con g \psi \in E(H^K)$.   So $\psi$ induces a graph morphism $\psi^*:  H^G \to H^K$.  

\end{lemma}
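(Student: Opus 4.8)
The plan is to mirror the proof of Lemma~\ref{L:comp1}, but with the roles of pre- and post-composition interchanged: here the morphism $\psi$ acts on the source, so I would use its defining property to transport edges \emph{from} $K$ \emph{into} $G$, rather than pushing them forward along a map out of $H$. First I would note that $\psi^*$ is well-defined on vertices: each vertex $f \in V(H^G)$ is a set map $V(G) \to V(H)$, and $f\psi$ is then a set map $V(K) \to V(H)$, hence a legitimate vertex of $H^K$. Thus $\psi^*(f) = f\psi$ makes sense, and it only remains to check that it preserves edges.

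To that end I would take an arbitrary edge $f \con g \in E(H^G)$ and verify the edge condition defining $f\psi \con g\psi \in E(H^K)$. By Definition~\ref{D:exp}, this amounts to showing that for every edge $k_1 \con k_2 \in E(K)$ we have $(f\psi)(k_1) \con (g\psi)(k_2) \in E(H)$. So I would fix such an edge $k_1 \con k_2$ in $K$ and aim to produce the required edge in $H$.

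The key step is to feed this edge through $\psi$: since $\psi$ is a graph morphism, $\psi(k_1) \con \psi(k_2) \in E(G)$. Now the hypothesis $f \con g \in E(H^G)$ says precisely that whenever $v_1 \con v_2 \in E(G)$ then $f(v_1) \con g(v_2) \in E(H)$; applying this with $v_1 = \psi(k_1)$ and $v_2 = \psi(k_2)$ yields $f(\psi(k_1)) \con g(\psi(k_2)) \in E(H)$, which is exactly $(f\psi)(k_1) \con (g\psi)(k_2) \in E(H)$. Since $k_1 \con k_2$ was arbitrary, this establishes $f\psi \con g\psi \in E(H^K)$, and hence $\psi^*$ is a graph morphism.

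I do not anticipate any genuine obstacle here; the argument is a short diagram-chase once the definitions are unfolded. The one point demanding care is the asymmetric ``cross'' condition in the exponential edge relation: it pairs $f$ on the first coordinate with $g$ on the second, so I would be careful to instantiate it with $\psi(k_1)$ in the first slot and $\psi(k_2)$ in the second, matching the order of the single edge $k_1 \con k_2$ and its image under $\psi$. Getting this pairing right is precisely what makes the precomposition respect the edge structure of the exponentials.
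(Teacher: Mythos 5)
Your proposal is correct and follows essentially the same argument as the paper's proof: push an edge $k_1 \con k_2$ of $K$ forward along $\psi$ to an edge of $G$, then apply the exponential edge condition for $f \con g$ to obtain $(f\psi)(k_1) \con (g\psi)(k_2)$. Your extra remarks about vertex-level well-definedness and the ordering of the cross condition are sound but do not change the route.
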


\begin{proof}  Suppose that $v_1 \con v_2 \in E(K)$;  then we know that  $\psi(v_1) \con \psi(v_2) \in  E(G)$.  Since  $f \con g$ in $H^G$,  $f(\psi(v_1) )\con g(\psi((v_2))$.  So$ f \psi \con g \psi$.   

\end{proof}

\begin{proposition}

 \label{P:adj} \cite{Docht1}  The category $\Gph$ is cartesian closed.  In particular, we have a bijection   $$\Gph(G \times H, K) \cong \Gph(G, K^H)$$ \end{proposition}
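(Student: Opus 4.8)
The plan is to exhibit the bijection explicitly by currying and uncurrying, and then to check that under these mutually inverse operations graph morphisms correspond to graph morphisms. At the level of vertex sets there is nothing to prove: since $V(G \times H) = V(G) \times V(H)$ and $V(K^H)$ is by definition the set of all set maps $V(H) \to V(K)$, the classical set-theoretic currying bijection already identifies set maps $V(G) \times V(H) \to V(K)$ with set maps $V(G) \to V(K^H)$. Explicitly, to $F$ I assign $\widetilde F$ with $\widetilde F(v)(w) = F(v,w)$, and to $\Phi$ I assign $\check\Phi$ with $\check\Phi(v,w) = \Phi(v)(w)$; these are inverse to one another as operations on set maps. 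All of the content therefore lies in matching up the edge conditions on the two sides.

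First I would take a graph morphism $F : G \times H \to K$ and show that $\widetilde F : G \to K^H$ is a morphism. This amounts to showing that $v_1 \con v_2 \in E(G)$ forces $\widetilde F(v_1) \con \widetilde F(v_2) \in E(K^H)$. Unwinding Definition~\ref{D:exp}, this last edge condition says precisely that whenever $w_1 \con w_2 \in E(H)$ we have $\widetilde F(v_1)(w_1) \con \widetilde F(v_2)(w_2)$, i.e.\ $F(v_1,w_1) \con F(v_2,w_2) \in E(K)$. But by Definition~\ref{D:prod}, the pair of edges $v_1 \con v_2 \in E(G)$ and $w_1 \con w_2 \in E(H)$ is exactly the statement that $(v_1,w_1) \con (v_2,w_2) \in E(G \times H)$, and since $F$ is a morphism this yields $F(v_1,w_1) \con F(v_2,w_2) \in E(K)$. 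Hence $\widetilde F$ is a morphism.

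Conversely, given a morphism $\Phi : G \to K^H$, I would check that $\check\Phi : G \times H \to K$ is a morphism by reading the same chain of equivalences in the other direction: an edge $(v_1,w_1) \con (v_2,w_2) \in E(G \times H)$ gives $v_1 \con v_2 \in E(G)$ and $w_1 \con w_2 \in E(H)$; the first, with $\Phi$ a morphism, yields $\Phi(v_1) \con \Phi(v_2) \in E(K^H)$, which applied to the edge $w_1 \con w_2$ gives $\Phi(v_1)(w_1) \con \Phi(v_2)(w_2)$, that is, $\check\Phi(v_1,w_1) \con \check\Phi(v_2,w_2) \in E(K)$. Since $F \mapsto \widetilde F$ and $\Phi \mapsto \check\Phi$ are already mutually inverse on set maps, these two implications show that the set bijection restricts to a bijection between $\Gph(G\times H, K)$ and $\Gph(G, K^H)$, as claimed.

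The reason there is essentially no obstacle is that both edge conditions reduce to the \emph{identical} statement ``$v_1 \con v_2$ and $w_1 \con w_2$ together imply $F(v_1,w_1)\con F(v_2,w_2)$''; the verification is a tautology once the definitions of the product and exponential edges are unwound, and the only point requiring care is keeping track of which variable ranges over $G$ and which over $H$ (and remembering that edges are unordered, so the apparent asymmetry in Definition~\ref{D:exp} causes no trouble). If full cartesian closure, i.e.\ a natural isomorphism, is wanted rather than a bare bijection, naturality in $G$, $H$ and $K$ can be read directly off the formula $\widetilde F(v)(w) = F(v,w)$, using the induced maps of Lemmas~\ref{L:comp1} and~\ref{L:comp2}.
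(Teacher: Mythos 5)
Your proposal is correct. The paper itself offers no proof of this proposition --- it is simply cited to Dochtermann --- so there is nothing internal to compare against; your argument is the standard currying/uncurrying one, and it is sound. The key observation, that the edge condition for $\widetilde F(v_1) \con \widetilde F(v_2)$ in $K^H$ and the edge condition for $F$ on $(v_1,w_1)\con(v_2,w_2)$ in $G\times H$ both unwind to the single statement ``$v_1 \con v_2$ and $w_1 \con w_2$ imply $F(v_1,w_1)\con F(v_2,w_2)$,'' is exactly right, and you correctly flag the one point needing care: since edges are unordered, the apparently asymmetric condition defining edges of the exponential graph is in fact symmetric, so no orientation issues arise. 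Your closing remark is also apt --- the bare bijection is all the paper ever uses (e.g.\ in Observation \ref{O:seq}), and naturality in $G$, $H$, $K$, needed for the full cartesian-closure claim, does follow by inspection of the formula $\widetilde F(v)(w) = F(v,w)$ together with Lemmas \ref{L:comp1} and \ref{L:comp2}.
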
 

\subsection{Walks and Concatenation}
\begin{definition} Let $P_n$ be the path graph with $n+1$ vertices $\{ 0, 1, \dots, n\}$ such that $i \con i+1$.  Let $I_n^{\ell}$ be the looped path graph with  $n+1$ vertices   $\{ 0, 1, \dots, n\}$ such that $i \con i$ and  $i \con {i+1}$.  

$$ \begin{tikzpicture}
\node at (-.5,0){$P_n = $};
\draw[fill] (0,0) circle (2pt);
\draw (0,0) --node[below]{0} (0,0);

\draw[fill] (1,0) circle (2pt);
\draw (1,0) --node[below]{1} (1,0);

\draw[fill] (2,0) circle (2pt);
\draw (2,0) --node[below]{2} (2,0);

\node at (3,0){$\cdots$}  ;

\draw[fill] (4,0) circle (2pt);
\draw (4,0) --node[below]{$n$} (4,0);

\draw(0,0) -- (1,0);
\draw(1,0) -- (2,0);
\draw(2,0) -- (2.7,0);
\draw(3.3,0) -- (4,0);

\end{tikzpicture}
\hspace{1cm} \begin{tikzpicture}
\node at (-.5,0){$I_n^{\ell} = $};
\draw[fill] (0,0) circle (2pt);
\draw (0,0) --node[below]{0} (0,0);
\draw (0,0)  to[in=50,out=140,loop, distance=.7cm] (0,0);

\draw[fill] (1,0) circle (2pt);
\draw (1,0) --node[below]{1} (1,0);
\draw (1,0)  to[in=50,out=140,loop, distance=.7cm] (1,0);

\draw[fill] (2,0) circle (2pt);
\draw (2,0) --node[below]{2} (2,0);
\draw (2,0)  to[in=50,out=140,loop, distance=.7cm] (2,0);

\node at (3,0){$\cdots$}  ;

\draw[fill] (4,0) circle (2pt);
\draw (4,0) --node[below]{$n$} (4,0);
\draw (4,0)  to[in=50,out=140,loop, distance=.7cm] (4,0);

\draw(0,0) -- (1,0);
\draw(1,0) -- (2,0);
\draw(2,0) -- (2.7,0);
\draw(3.3,0) -- (4,0);

\end{tikzpicture}$$
\end{definition} 

\begin{definition}\label{D:path}  A  {\bf walk} in $G$ of length $n$ is a morphism $\alpha:  P_n \to G$.    A {\bf looped walk} in 
$G$ of length $n$ is a morphism $\alpha:  I_n^{\ell} \to G$. If $\alpha(v_0) = x$ and $\alpha(v_n ) = y$ we say $\alpha$ is a  walk [resp. looped walk]  from $x$ to $y$. \end{definition}

A walk can be described by a list of vertices $(v_0 v_1 v_2  \dots v_n)$ giving the images of the vertices   $\alpha(i) = v_i$, such that  $v_i \con v_{i+1}$.    Thus this definition agrees with the usual graph definition of walk.  In the looped case, since $i\con  i \in E(I_n^{\ell})$, we will have $v_i \con v_i$ and so a looped walk is simply a walk where all the vertices along the walk are looped.  

\begin{definition}\label{D:concat}   Given a walk $\alpha:  P_{n} \to G$ from $x$ to $y$,  and a walk $\beta:   P_{m} \to G$ from $y$ to $z$, we define the   {\bf concatenation of walks}  $\alpha * \beta:  P_{m+n} \to G $ by    $$ (\alpha*\beta) (i) = \begin{cases} \alpha(i) &  \textup{ if } i\leq n \\ 
\beta({i-n}) &  \textup{  if } n < i \leq n +m\\ 
\end{cases} $$ Since we are assuming that $\alpha(n )  = y = \beta(0)$,  $\alpha * \beta$ defines a length $n+m$ walk   from $x$ to $z$.  In vertex list form, the concatenation $(xv_1v_2 \dots v_{n-1}y)*(yw_1w_2 \dots w_{m-1}z) = (x v_1 v_2 \dots v_{n-1} y w_1 \dots w_{m-1} z).$
Contatenation of looped walks is defined in the same way.   \end{definition} 

\begin{example}\label{E:concat}  
Consider the graph below, and let $\alpha$ be a length 1 looped walk $(v_1v_2)$ and $\beta$ a length 2 looped walk $(v_2v_3v_4)$. 

$$\begin{tikzpicture}

\draw[fill] (0,0) circle (2pt);
\draw (0,0) --node[left] {$v_1$} (0,0);
\draw (0,0)  to[in=50,out=140,loop, distance=.7cm] (0,0);
\draw[fill] (1,1) circle (2pt);
\draw (1,1.2) --node[above right] {$v_2$} (1,1.2);
\draw (1,1)  to[in=50,out=140,loop, distance=.7cm] (1,1);
\draw[fill] (2,0) circle (2pt);
\draw (2,0)  to[in=50,out=140,loop, distance=.7cm] (2,0);
\draw (2,0) --node[right] {$v_3$} (2,0);
\draw[fill] (1,-1) circle (2pt);
\draw (1,-1)  to[in=50,out=140,loop, distance=.7cm] (1,-1);
\draw (1,-1) --node[below] {$v_4$} (1,-1);

\draw (0,0)--(1,1)--(2,0)--(1,-1)--(0,0);
\draw (0,0) -- (2,0);
\draw (1,1) -- (1,-1);

\draw[dashed, ultra thick, red] (0,0)--(1,1);
\draw[dotted, ultra thick, ForestGreen] (1,1)--(2,0)--(1,-1);

\draw[dashed, ultra thick, red] (0,0)  to[in=50,out=140,loop, distance=.7cm] (0,0);
\draw[dashed, ultra thick, red] (1,1)  to[in=50,out=140,loop, distance=.7cm] (1,1);
\draw[dotted, ultra thick, ForestGreen] (1,1)  to[in=50,out=140,loop, distance=.7cm] (1,1);

\draw[dotted, ultra thick, ForestGreen] (2,0)  to[in=50,out=140,loop, distance=.7cm] (2,0);
\draw[dotted, ultra thick, ForestGreen] (1,-1)  to[in=50,out=140,loop, distance=.7cm] (1,-1);

\draw[red] (0,0) --node[below] {\tiny{$\alpha(0)$}} (0,0);
\draw[red] (1,1) --node[left] {\tiny{$\alpha(1)$}} (1,1);

\draw[ForestGreen] (1,1) --node[right] {\tiny{$\beta(0)$}} (1,1);
\draw[ForestGreen] (2,0) --node[below right] {\tiny{$\beta(1)$}} (2,0);
\draw[ForestGreen] (1,-1) --node[right] {\tiny{$\beta(2)$}} (1,-1);

\end{tikzpicture}$$

Then $\alpha*\beta$ is a length 3 looped walk $(v_1v_2v_3v_4)$.

$$\begin{tikzpicture}

\draw[fill] (0,0) circle (2pt);
\draw (0,0) --node[left] {$v_1$} (0,0);
\draw (0,0)  to[in=50,out=140,loop, distance=.7cm] (0,0);
\draw[fill] (1,1) circle (2pt);
\draw (1,1.2) --node[above right] {$v_2$} (1,1.2);
\draw (1,1)  to[in=50,out=140,loop, distance=.7cm] (1,1);
\draw[fill] (2,0) circle (2pt);
\draw (2,0)  to[in=50,out=140,loop, distance=.7cm] (2,0);
\draw (2,0) --node[right] {$v_3$} (2,0);
\draw[fill] (1,-1) circle (2pt);
\draw (1,-1)  to[in=50,out=140,loop, distance=.7cm] (1,-1);
\draw (1,-1) --node[below] {$v_4$} (1,-1);

\draw (0,0)--(1,1)--(2,0)--(1,-1)--(0,0);
\draw (0,0) -- (2,0);
\draw (1,1) -- (1,-1);

\draw[ultra thick, blue] (0,0)--(1,1)--(2,0)--(1,-1);

\draw[ultra thick, blue] (0,0)  to[in=50,out=140,loop, distance=.7cm] (0,0);
\draw[ultra thick, blue] (1,1)  to[in=50,out=140,loop, distance=.7cm] (1,1);
\draw[ultra thick, blue] (2,0)  to[in=50,out=140,loop, distance=.7cm] (2,0);
\draw[ultra thick, blue] (1,-1)  to[in=50,out=140,loop, distance=.7cm] (1,-1);

\draw[blue] (0,0) --node[below] {\tiny{$(\alpha*\beta)(0)$}} (0,0);
\draw[blue] (1,1) --node[left] {\tiny{$(\alpha*\beta)(1)$}} (1,1);
\draw[blue] (2,0) --node[below right] {\tiny{$(\alpha*\beta)(2)$}} (2,0);
\draw[blue] (1,-1) --node[right] {\tiny{$(\alpha*\beta)(3)$}} (1,-1);

\end{tikzpicture}$$

\end{example}

\begin{obs} \label{O:unit}  For any vertex $x$, there is a constant  length $0$ walk $c_x$ from $x$ to $x$ defined by $c_x(0)= x$.  Then for any other walk $\alpha$ from $x$ to $y$, $c_x * \alpha = \alpha$ and $\alpha * c_y = \alpha$.   If $x$ is looped, we can similarly define a constant  looped  walk at $x$.      \end{obs}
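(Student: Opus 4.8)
The plan is to verify everything directly from the concatenation formula in Definition \ref{D:concat}, since the statement amounts to a bookkeeping check that $c_x$ is a two-sided unit for $*$.

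First I would confirm that $c_x$ is a legitimate object of the theory. The path graph $P_0$ has the single vertex $\{0\}$ and no edges, so the set map $c_x(0) = x$ vacuously preserves edges and is therefore a graph morphism, i.e. a length $0$ walk from $x$ to $x$ in the sense of Definition \ref{D:path}.

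Next, to check $c_x * \alpha = \alpha$ for a walk $\alpha \colon P_n \to G$ from $x$ to $y$, I would apply Definition \ref{D:concat} with the first factor of length $0$ and the second of length $n$. The concatenation then has length $0 + n = n$, so both $c_x * \alpha$ and $\alpha$ are maps $P_n \to G$ and it suffices to compare them vertexwise. For $i = 0$ the formula gives $(c_x * \alpha)(0) = c_x(0) = x = \alpha(0)$, and for $0 < i \le n$ it gives $(c_x * \alpha)(i) = \alpha(i - 0) = \alpha(i)$; hence the two walks agree on every vertex. For $\alpha * c_y$ I would instead take the first factor of length $n$ and the second of length $0$, so the total length is again $n$; here the formula yields $(\alpha * c_y)(i) = \alpha(i)$ for all $i \le n$, while the branch using $c_y$ would require $n < i \le n$ and is therefore vacuous. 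Thus $\alpha * c_y = \alpha$.

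Finally, for the looped version I would replace $P_0$ by $I_0^{\ell}$, whose single vertex carries the loop $0 \con 0$. The only non-routine point in the whole argument appears here: for $c_x(0) = x$ to define a morphism out of $I_0^{\ell}$, the loop must map to an edge, which is exactly the hypothesis that $x$ is looped, i.e. $x \con x \in E(G)$. Granting this, the unit computation is identical to the unlooped case, since concatenation of looped walks uses the same piecewise formula. I expect no genuine obstacle beyond keeping the index shift and the boundary case $i = n$ straight; the content is a direct verification against the definition.
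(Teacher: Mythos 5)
Your verification is correct and is exactly the direct check from Definition \ref{D:concat} that the paper intends; the paper states this as an Observation with no written proof, so there is no alternative argument to compare against. Your one substantive remark --- that in the looped case the hypothesis $x \con x \in E(G)$ is precisely what makes $c_x \colon I_0^{\ell} \to G$ a graph morphism --- is accurate and is the only point that is not purely mechanical.
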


It is also straightforward to compare definitions and see both of the following:  

\begin{lemma}\label{L:assoc}  Contatenation of  [ordinary or looped] walks is associative:  when the endpoints match up to make  concatenation defined, we have $(\alpha * \beta) * \gamma = \alpha * (\beta * \gamma)$ 
\end{lemma}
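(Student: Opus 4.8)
The plan is to reduce the asserted equality of walks to an equality of underlying vertex maps, and then verify that equality by unwinding the piecewise definition of concatenation from Definition \ref{D:concat}. Suppose $\alpha \colon P_n \to G$ runs from $x$ to $y$, that $\beta \colon P_m \to G$ runs from $y$ to $z$, and that $\gamma \colon P_p \to G$ runs from $z$ to $w$, so that all the endpoint conditions needed to form the concatenations hold. By Definition \ref{D:concat} both $(\alpha * \beta) * \gamma$ and $\alpha * (\beta * \gamma)$ are walks of length $n+m+p$, that is, morphisms $P_{n+m+p} \to G$. Since a morphism in $\Gph$ is completely determined by its action on vertices, it suffices to show that these two walks send each vertex $i \in \{0, 1, \dots, n+m+p\}$ to the same vertex of $G$.

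The key step is then a direct case analysis on $i$. Expanding the left-hand side, $((\alpha * \beta) * \gamma)(i)$ equals $(\alpha * \beta)(i)$ when $i \le n+m$ and equals $\gamma(i - (n+m))$ when $n+m < i$; expanding the first branch once more splits it into $\alpha(i)$ for $i \le n$ and $\beta(i-n)$ for $n < i \le n+m$. Expanding the right-hand side, $(\alpha * (\beta*\gamma))(i)$ equals $\alpha(i)$ for $i \le n$ and $(\beta*\gamma)(i-n)$ for $i > n$, and on this second branch the inner concatenation contributes $\beta(i-n)$ when $i - n \le m$ and $\gamma((i-n)-m)$ when $i - n > m$. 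I would then observe that both expansions partition $\{0, \dots, n+m+p\}$ into the same three intervals, namely $i \le n$, $n < i \le n+m$, and $n+m < i \le n+m+p$, and return the same value on each, that is $\alpha(i)$, $\beta(i-n)$, and $\gamma(i - n - m)$ respectively. This yields equality of the vertex maps and hence of the morphisms.

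There is no genuinely hard step here; the only thing to watch is the bookkeeping of the index shifts, in particular confirming that $\gamma$ is evaluated at $i - (n+m)$ from both sides, since the left association shifts by $n+m$ in a single step whereas the right association shifts first by $n$ and then by $m$ inside the inner concatenation. For the looped case the argument is verbatim the same: $I_n^{\ell}$ has the same vertex set $\{0, \dots, n\}$ as $P_n$, and looped concatenation is given by the identical formula, so the same three-interval analysis applies without modification. I would record this observation once rather than repeating the computation.
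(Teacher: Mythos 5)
Your proof is correct; the paper itself gives no written argument for this lemma, merely remarking beforehand that it is ``straightforward to compare definitions,'' and your three-interval case analysis with the index-shift bookkeeping is precisely that intended comparison, carried out in full (including the correct observation that the looped case is verbatim identical).
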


\begin{lemma}\label{L:distr} Contatenation of [ordinary or looped] walks is distributive:  when $\phi$ and $\psi$ are graph homomorphisms, then  $\phi(g*h) = \phi g * \phi h$ and $(g * h) \psi = g \psi * h \psi$.  
\end{lemma}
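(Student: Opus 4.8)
The plan is to exploit that a walk, being a graph morphism out of a path graph, is completely determined by its vertex map, so to establish the two identities it suffices to check that the walks on each side agree vertex-by-vertex. Both operations in the statement act on a walk \emph{pointwise}: in $\phi(g*h)=\phi g * \phi h$ the operation is post-composition, sending a walk listed as $(u_0 u_1 \cdots u_k)$ to $(\phi u_0\, \phi u_1 \cdots \phi u_k)$, and likewise $(g*h)\psi = g\psi * h\psi$ is governed by an operation acting on each vertex in the list (via the induced morphisms $\phi_*,\psi^*$ of Lemmas \ref{L:comp1} and \ref{L:comp2} in the case where $g,h$ are walks in an exponential graph). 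Thus the whole lemma reduces to a single observation: applying a fixed graph morphism $F$ to every vertex of a walk commutes with concatenation.

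First I would fix walks $g:P_n\to X$ from $x$ to $y$ and $h:P_m\to X$ from $y$ to $z$, so that $g*h:P_{n+m}\to X$ is defined by Definition \ref{D:concat}. I would then evaluate both sides of $F(g*h)=Fg*Fh$ at an arbitrary vertex $i\in V(P_{n+m})$ and split into the two cases $i\le n$ and $n<i\le n+m$ dictated by the piecewise formula for concatenation. In the first case both sides return $F(g(i))$ and in the second both return $F(h(i-n))$, so the two vertex maps coincide. Reading $F=\phi$ (or the induced $\phi_*$) gives the first identity, and reading $F=\psi^*$, i.e. $u\mapsto u\psi$ on each vertex, gives the second.

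It remains to check that these operations really do send walks to walks of the same flavour, so that both sides are legitimately concatenable. This is immediate: a graph morphism carries each required edge $u_{j}\con u_{j+1}$ to an edge, and if $g,h$ are looped walks then $u_j\con u_j$ is carried to $F(u_j)\con F(u_j)$, so loopedness is preserved; the induced maps $\phi_*,\psi^*$ are themselves morphisms by Lemmas \ref{L:comp1} and \ref{L:comp2}, so the same applies in the exponential setting, and in particular the matched endpoint $F(y)$ keeps the right-hand concatenation defined.

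I do not expect a genuine obstacle here—the statement is essentially bookkeeping, as the text's phrase ``straightforward to compare definitions'' anticipates. The only point requiring a moment's care is the junction vertex $i=n$, where one must confirm that the two branches of the concatenation formula agree before applying $F$ (they do, since $g(n)=y=h(0)$), so that the equality of vertex maps is unambiguous and both concatenations on the right-hand side are actually well defined.
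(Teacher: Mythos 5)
Your proof is correct and matches the paper's (omitted) argument: the paper simply asserts the lemma as "straightforward to compare definitions," and your vertex-by-vertex case check against the piecewise formula of Definition \ref{D:concat}, together with the observation that post-/pre-composition preserves walks, loopedness, and the matched endpoint, is exactly that comparison carried out in full.
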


\section{Graphs as a 2-Category}\label{S:HT} 

 In this section, we show that $\Gph$ has the structure of a 2-category as defined in \cite{riehlCTIC} with $\times$-homotopies between morphisms as 2-cells.

We define homotopy between graph morphisms $G \to H$ via the graph $G \times I^{\ell}_n$.  Because we use a looped interval graph,  we have a graph inclusion $G \cong G \times \{ k\}  \hookrightarrow G \times I^{\ell}_n$ for each vertex $k$ of $I_n^{\ell}$.

\begin{definition}\cite{Docht1} \label{D:htpy} Given $f, g:  G \to H$, we say that $f$ is {\bf $\times$-homotopic} to $g$, written $f \simeq g$,  if there is a map $\Lambda: G \times I_n^{\ell}  \to H$ such that $\Lambda | _{G \times \{ 0\} } = f$ and $\Lambda | _{G \times \{ n\} } = g$.   We will say $\Lambda$ is a length $n$ homotopy.        \end{definition}

This is defined as  $\times$-homotopy  in \cite{Docht1} to distinguish it from other graph homotopy notions considered in that paper, such as $A$-homotopy, and the usual notion of homotopy of spaces or simplicial complexes.  
Since this is the only version of homotopy that we will consider in this paper, we will also refer to it simply as 'homotopy'.

\begin{obs}\cite{Docht1}\label{O:seq}   By Propostion \ref{P:adj}, a morphism $ \Lambda: G \times I_n^{\ell}  \to H$ is equivalent to a morphism $\Lambda:  I_n^{\ell} \to H^G$.   Since all the vertices of $I_n^{\ell} $ are looped, they can only be mapped to looped vertices in $H^G$ which correspond to graph morphisms by Lemma \ref{P:adj}.    So the restriction of $H$ to $G \times \{k\}$ always gives a graph morphism, and a length $n$ $\times$-homotopy corresponds to a sequence of graph morphisms $(ff_1 f_2 f_3 \dots f_{n-1} g)$ such that $f_i \con f_{i+1} \in E(H^G)$.   Thus we can think of a $\times$-homotopy from $f$ to $g$ as a  looped walk in the exponential object $H^G$.  We will switch between these two views of homotopy as convenient.

\end{obs}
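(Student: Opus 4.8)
The plan is to chain together three ingredients already in place: the cartesian-closed adjunction of Proposition \ref{P:adj}, the characterization of graph morphisms as the looped vertices of an exponential graph from Observation \ref{O:morph}, and the vertex-list description of (looped) walks following Definition \ref{D:path}. None of the steps is genuinely hard; the content of the observation is the bookkeeping that aligns these three facts, so the main thing to get right is tracking exactly what data the adjoint correspondence assigns to each vertex of $I_n^{\ell}$.

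First I would apply the adjunction. Since the categorical product of Definition \ref{D:prod} is symmetric (the map $(v,w)\mapsto(w,v)$ is visibly an isomorphism $G\times H \to H\times G$), we have $G \times I_n^{\ell} \cong I_n^{\ell}\times G$, and Proposition \ref{P:adj} applied with the three graphs $I_n^{\ell}, G, H$ in the roles of $G,H,K$ there gives a bijection
$$\Gph(G\times I_n^{\ell},\,H)\;\cong\;\Gph(I_n^{\ell}\times G,\,H)\;\cong\;\Gph(I_n^{\ell},\,H^G).$$
This sends a homotopy $\Lambda$ to an adjoint morphism $\widehat{\Lambda}\colon I_n^{\ell}\to H^G$, and I would record the explicit form of the transpose: the vertex $\widehat{\Lambda}(k)\in V(H^G)$ is exactly the set map $v\mapsto \Lambda(v,k)$, i.e.\ the restriction $\Lambda|_{G\times\{k\}}$ regarded as a set map $V(G)\to V(H)$.

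Next I would extract the two advertised features of $\widehat{\Lambda}$ from the structure of $I_n^{\ell}$. Each vertex $k$ is looped, so any morphism must send it to a looped vertex, giving $\widehat{\Lambda}(k)\con\widehat{\Lambda}(k)\in E(H^G)$; by Observation \ref{O:morph} this is precisely the assertion that the set map $\widehat{\Lambda}(k)=\Lambda|_{G\times\{k\}}$ is a graph morphism $G\to H$, which I would name $f_k$. Moreover each edge $k\con k+1$ of $I_n^{\ell}$ is preserved, yielding $f_k\con f_{k+1}\in E(H^G)$. Thus $\widehat{\Lambda}$ is recorded by the vertex list $(f_0 f_1\cdots f_n)$, which is a looped walk in $H^G$ in the sense of Definition \ref{D:path}; the boundary conditions $\Lambda|_{G\times\{0\}}=f$ and $\Lambda|_{G\times\{n\}}=g$ translate into $f_0=f$ and $f_n=g$, so the walk runs from $f$ to $g$.

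Finally, to justify \emph{equivalence} rather than a one-way implication, I would verify the converse: given any sequence of graph morphisms $(f=f_0,f_1,\dots,f_n=g)$ with $f_i\con f_{i+1}\in E(H^G)$, the assignment $k\mapsto f_k$ defines a set map $V(I_n^{\ell})\to V(H^G)$ that preserves both the loops $k\con k$ (each $f_k$ being a morphism is looped in $H^G$, again by Observation \ref{O:morph}) and the edges $k\con k+1$, hence is a morphism $I_n^{\ell}\to H^G$; transporting it back across the adjunction recovers a homotopy. I do not expect a serious obstacle here, since the argument is formal; the only real care is the bookkeeping noted above, namely confirming that the adjunction transpose genuinely identifies $\widehat{\Lambda}(k)$ with the slice $\Lambda|_{G\times\{k\}}$ and that the product symmetry is accounted for when matching the exponent variable in Proposition \ref{P:adj}.
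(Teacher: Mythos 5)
Your argument is correct and follows the same route as the paper's: transpose across the cartesian-closed adjunction of Proposition \ref{P:adj}, identify the looped vertices of $H^G$ with graph morphisms via Observation \ref{O:morph}, and read off the looped walk $(f_0 f_1 \cdots f_n)$. Your version is slightly more careful than the paper's in two harmless ways --- you make the product symmetry $G\times I_n^{\ell}\cong I_n^{\ell}\times G$ explicit and you check the converse direction --- and you correctly route the ``looped vertex $=$ graph morphism'' step through Observation \ref{O:morph} rather than the mis-cited Lemma \ref{P:adj}.
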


\begin{obs}\cite{Docht1} \label{O:htpyequiv}    $f\simeq g$ defines an equivalence relation on morphisms $G \to H$.

\end{obs}

    \begin{example}\label{E:IntimesG}  
 Suppose we have the graph $ \begin{tikzpicture} 
\node at (-1,0){$G=P_2$};

\draw[fill] (0,0) circle (2pt);
\draw (0,0) --node[below]{$a$} (0,0);

\draw[fill] (1,0) circle (2pt);
\draw (1,0) --node[below]{$b$} (1,0);

\draw[fill] (2,0) circle (2pt);
\draw (2,0) --node[below]{$c$} (2,0);

\draw (0,0) -- (1,0)--(2,0);
\end{tikzpicture}$

Consider the maps $\id_G, f:G\to G$ where $f(a)=f(c)=a$ and $f(b)=b$.    We abbreviate these morphisms by listing the images of vertices $a, b,$ and $c$ in order, so $id_G = abc$ and $f = aba$.

$$\begin{tikzpicture}

\draw[fill] (3,0) circle (2pt);
\draw[blue] (3,0) circle (2.2pt);
\draw (3,0) --node[below] {$a$} (3,0);
\draw (3,0) --node[above, blue] {\tiny{$\id(a)$}} (3,0);
\draw[fill] (4,0) circle (2pt);
\draw[blue] (4,0) circle (2.2pt);
\draw (4,0) --node[below] {$b$} (4,0);
\draw (4,0) --node[above, blue] {\tiny{$\id(b)$}} (4,0);
\draw[fill] (5,0) circle (2pt);
\draw[blue] (5,0) circle (2.2pt);
\draw (5,0) --node[below] {$c$} (5,0);
\draw (5,0) --node[above, blue] {\tiny{$\id(c)$}} (5,0);

\draw (3,0)--(4,0)--(5,0);
\draw[blue, ultra thick, dashed] (3,0)--(4,0)--(5,0);

\draw (4, -.7) -- node{$\id$} (4,-.7);

\end{tikzpicture}
\phantom{ww}
\begin{tikzpicture}

\draw[fill] (3,0) circle (2pt);
\draw[blue] (3,0) circle (2.2pt);
\draw (3,0) --node[below] {$a$} (3,0);
\draw (3,0) --node[left, red] {\tiny{$f(a)$}} (3,0);
\draw (3,0) --node[above, red] {\tiny{$f(c)$}} (3,0);
\draw[fill] (4,0) circle (2pt);
\draw[blue] (4,0) circle (2.2pt);
\draw (4,0) --node[below] {$b$} (4,0);
\draw (4,0) --node[above, red] {\tiny{$fb)$}} (4,0);
\draw[fill] (5,0) circle (2pt);
\draw (5,0) --node[below] {$c$} (5,0);

\draw (3,0)--(4,0)--(5,0);
\draw[red, ultra thick, dashed] (3,0)--(4,0);

\draw (4, -.7) -- node{$f$} (4,-.7);

\end{tikzpicture}
$$

We can define a homotopy $\Lambda:G\times I_1^{\ell}\to G$ from $id_G$ to $f$, where $\Lambda((x,0))=x$ and $\Lambda((x,1))=f(x)$.  Since $0, 1$ are both looped in $I_1^{\ell}$, the subgraphs $ G \times \{ 0 \}$ and $G \times \{1\}$ are both isomorphic to $G$.  It is easy to verify that $\Lambda$ is a graph homomorphism and thus is a length 1 homotopy.

 $$\begin{tikzpicture}

\draw[fill] (0,0) circle (2pt);
\draw (0,0) --node[below]{\tiny $(a,0)$} (0,0);

\draw[fill] (1,0) circle (2pt);
\draw (1,0) --node[below]{\tiny $(b,0)$} (1,0);

\draw[fill] (2,0) circle (2pt);
\draw (2,0) --node[below]{\tiny $(c,0)$} (2,0);

\draw[fill] (0,2) circle (2pt);
\draw (0,2) --node[above]{\tiny $(a,1)$} (0,2);

\draw[fill] (1,2) circle (2pt);
\draw (1,2) --node[above]{\tiny $(b,1)$} (1,2);

\draw[fill] (2,2) circle (2pt);
\draw (2,2) --node[above]{\tiny $(c,1)$} (2,2);

\draw(0,0) -- (1,2);
\draw(0,0) -- (1,0);
\draw(0,2) -- (1,0);
\draw(0,2) -- (1,2);
\draw(1,0) -- (2,2);
\draw(1,0) -- (2,0);
\draw(1,2) -- (2,0);
\draw(1,2) -- (2,2);

\node at (1,-.6){$P_2\times I_1^{\ell}$};

\draw[->](2, 1) -- node[above]{$\Lambda$} (4,1);

\draw[fill] (5,1) circle (2pt);
\draw[blue] (5,1) circle (2.2pt);
\draw (5,1) --node[below, blue] {\tiny$\Lambda(a,0)$} (5,1);
\draw (5,1) --node[above , red] {\tiny{$\Lambda(a,1)\, \Lambda(c,1)$}} (5,1);
\draw[fill] (6.5,1) circle (2pt);
\draw[blue] (6.5,1) circle (2.2pt);
\draw (6.5,1) --node[below, blue] {\tiny$\Lambda(b,0)$} (6.5,1);
\draw (6.5,1) --node[above, red] {\tiny{$\Lambda(b,1)$}} (6.5,1);
\draw[fill] (8,1) circle (2pt);
\draw[blue] (8,1) circle (2.2pt);
\draw (8,1) --node[below,blue] {\tiny $\Lambda(c,0)$} (8,1);

\draw (5,1)--(6.5,1)--(8,1);
\draw[purple, ultra thick, dashed] (5,1)--(6.5,1)--(8,1);

\end{tikzpicture}$$

 \end{example}

\begin{lemma}\label{L:comp3}
Suppose that $g\simeq g':  G \to H$.  If  $h:H\to K$, then $hg \simeq hg'$;  and if $f:  F \to G$, then   $ gf\simeq g'f$.
\end{lemma}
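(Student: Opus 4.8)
The plan is to transport the given homotopy through a single induced map, working in the exponential object and invoking Observation \ref{O:seq} together with Lemmas \ref{L:comp1} and \ref{L:comp2}.

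By Observation \ref{O:seq}, the hypothesis $g \simeq g'$ provides a looped walk $\Lambda \colon I_n^{\ell} \to H^G$ from $g$ to $g'$, that is, a sequence of graph morphisms $(g\, g_1 \cdots g_{n-1}\, g')$ with $g_i \con g_{i+1}$ in $H^G$. Each of the two conclusions will then follow by postcomposing this walk with an appropriate morphism of exponential graphs.

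First I would handle $hg \simeq hg'$. Applying Lemma \ref{L:comp1} to $h \colon H \to K$ gives a graph morphism $h_* \colon H^G \to K^G$, $\gamma \mapsto h\gamma$. Then $h_* \circ \Lambda \colon I_n^{\ell} \to K^G$ is again a looped walk, being a composite of morphisms out of $I_n^{\ell}$, and since $h_*(g) = hg$ and $h_*(g') = hg'$ its endpoints are $hg$ and $hg'$; by Observation \ref{O:seq} this is exactly a homotopy $hg \simeq hg'$. Symmetrically, for $gf \simeq g'f$ I would apply Lemma \ref{L:comp2} to $f \colon F \to G$ to obtain $f^* \colon H^G \to H^F$, $\gamma \mapsto \gamma f$; then $f^* \circ \Lambda$ is a looped walk from $f^*(g) = gf$ to $f^*(g') = g'f$, giving the homotopy $gf \simeq g'f$.

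There is no real obstacle here beyond bookkeeping: the one thing to verify is that composing the looped walk $\Lambda$ with $h_*$ (resp.\ $f^*$) again lands in looped vertices and respects adjacencies, which is automatic since it is a composite of graph morphisms and Lemmas \ref{L:comp1}, \ref{L:comp2} already supply the induced maps. Alternatively, one could argue directly on $G \times I_n^{\ell}$ without passing to the exponential: for the first claim take $h \circ \Lambda \colon G \times I_n^{\ell} \to K$ and check that its restrictions to $G \times \{0\}$ and $G \times \{n\}$ are $hg$ and $hg'$; for the second take $\Lambda \circ (f \times \id_{I_n^{\ell}}) \colon F \times I_n^{\ell} \to H$ and check that its restrictions are $gf$ and $g'f$. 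Proposition \ref{P:adj} makes the two approaches interchangeable, and I would present whichever reads most cleanly.
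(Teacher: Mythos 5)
Your proposal is correct and follows essentially the same route as the paper: both take the looped walk $\Lambda$ in $H^G$ realizing $g \simeq g'$ and push it forward through the induced maps of Lemmas \ref{L:comp1} and \ref{L:comp2} to obtain the homotopies $hg \simeq hg'$ and $gf \simeq g'f$. Your write-up is if anything slightly more careful about identifying the endpoints and noting why the composite is again a looped walk.
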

\begin{proof}
Since $g\simeq g'$, there is a length $n$ homotopy  $\Lambda$ from  $g$ to $g'$ in $H^G$.  Then $h_*\Lambda$ defines a  length $n$ homotopy from $hg $ to $hg'$  by Lemma \ref{L:comp1}.  Similarly, $\Lambda (f \times id_{I^{\ell}_{n}})^* $ defines a  length $n$ homotopy from $gf $ to $g'f$ by Lemma \ref{L:comp2}.  

\end{proof}

\begin{definition}[Concatenation of Homotopies]  \label{D:conH} Given $\Lambda_1:  f \simeq g$ and $\Lambda_2:  g \simeq h$, we define $\Lambda_1 * \Lambda _2:  f \simeq h$ using the concatenation of looped walks in $G^H$ of Definition \ref{D:concat}.   
\end{definition}


\begin{example}\label{E:examver}

Let $G=C_4$ and $H=P_2$ with vertices labeled as below.  

$$\begin{tikzpicture}

\draw[fill] (0,0) circle (2pt);
\draw (0,0) --node[left] {0} (0,0);
\draw[fill] (1/2,1/2) circle (2pt);
\draw (1/2,1/2) --node[above] {1} (.5,.5);
\draw[fill] (1,0) circle (2pt);
\draw (1,0) --node[right] {2} (1,0);
\draw[fill] (1/2,-1/2) circle (2pt);
\draw (1/2,-1/2) --node[below] {3} (.5,-.5);

\draw (0,0)--(.5,.5)--(1,0)--(.5,-.5)--(0,0);

\draw (.5, -1.1) -- node{$G$} (.5,-1.1);

\draw[fill] (3,0) circle (2pt);
\draw (3,0) --node[below] {$a$} (3,0);
\draw[fill] (4,0) circle (2pt);
\draw (4,0) --node[below] {$b$} (4,0);
\draw[fill] (5,0) circle (2pt);
\draw (5,0) --node[below] {$c$} (5,0);

\draw (3,0)--(4,0)--(5,0);

\draw (4, -1.1) -- node{$H$} (4,-1.1);

\end{tikzpicture}$$

Let $f:G\to H$ be defined by $f(0)=f(2)=b, f(1)=a, f(3)=c$.  Again, we will abbreviate this morphism by listing the images of $0, 1, 2, 3$ in order, so $f = babc$.  Let $f':G\to 
H$ be defined by $baba$, and let $f'':G\to H$ be defined by $bcbc$.  One can check that $f,f',f''\in \Gph(G, H)$.  

$$\begin{tikzpicture}[scale=1.3]

\draw[fill] (3,0) circle (2pt);
\draw[blue] (3,0) circle (2.2pt);
\draw (3,0) --node[below] {$a$} (3,0);
\draw (3,0) --node[above, blue] {\tiny{$f(1)$}} (3,0);
\draw[fill] (4,0) circle (2pt);
\draw[blue] (4,0) circle (2.2pt);
\draw (4,0) --node[below] {$b$} (4,0);
\draw (4,0) --node[above, blue] {\tiny{$f(0), f(2)$}} (4,0);
\draw[fill] (5,0) circle (2pt);
\draw[blue] (5,0) circle (2.2pt);
\draw (5,0) --node[below] {$c$} (5,0);
\draw (5,0) --node[above, blue] {\tiny{$f(3)$}} (5,0);

\draw (3,0)--(4,0)--(5,0);
\draw[blue, ultra thick, dashed] (3,0)--(4,0)--(5,0);


\end{tikzpicture}
\ \ \ 
\begin{tikzpicture}[scale=1.3]

\draw[fill] (3,0) circle (2pt);
\draw[blue] (3,0) circle (2.2pt);
\draw (3,0) --node[below] {$a$} (3,0);
\draw (3,0) --node[left, blue] {\tiny{$f'(1)$}} (3,0);
\draw (3,0) --node[above, blue] {\tiny{$f'(3)$}} (3,0);
\draw[fill] (4,0) circle (2pt);
\draw[blue] (4,0) circle (2.2pt);
\draw (4,0) --node[below] {$b$} (4,0);
\draw (4,0) --node[above, blue] {\tiny{$f'(0), f'(2)$}} (4,0);
\draw[fill] (5,0) circle (2pt);
\draw (5,0) --node[below] {$c$} (5,0);

\draw (3,0)--(4,0)--(5,0);
\draw[blue, ultra thick, dashed] (3,0)--(4,0);


\end{tikzpicture}
\ \ \ \  \ \ 
\begin{tikzpicture}[scale=1.3]

\draw[fill] (3,0) circle (2pt);
\draw (3,0) --node[below] {$a$} (3,0);
\draw[fill] (4,0) circle (2pt);
\draw[blue] (4,0) circle (2.2pt);
\draw (4,0) --node[below] {$b$} (4,0);
\draw (4,0) --node[above, blue] {\tiny{$f''(0), f''(2)$}} (4,0);
\draw[fill] (5,0) circle (2pt);
\draw[blue] (5,0) circle (2.2pt);
\draw (5,0) --node[below] {$c$} (5,0);
\draw (5,0) --node[above, blue] {\tiny{$f''(1)$}} (5,0);
\draw (5,0) --node[right, blue] {\tiny{$f''(3)$}} (5,0);

\draw (3,0)--(4,0)--(5,0);
\draw[blue, ultra thick, dashed] (4,0)--(5,0);


\end{tikzpicture}
$$

Since $f \con f'\in E(H^G)$   we have a length $1$ homotopy 
 $\alpha:I_1^{\ell} \to H^G$ defined by $\alpha(0)=f, \alpha(1)=f'$.  Similarly, $f' \con f'' \in E(H^G)$  and so we have a homotopy  $\alpha':I_1^{\ell} \to H^G $ defined by $\alpha'(0)=f', \alpha'(1)=f''$.  Then $\alpha* \alpha':I_2^{\ell} \to H^G$ is defined by the looped walk $(f f' f'')$ in $H^G$, depicted in Figure 1 below.

\begin{figure}[h] 
$$\begin{tikzpicture}

\draw[fill] (0,0) circle (2pt);
\draw (0,0) --node[left] {$babc$} (0,0);
\draw (0,0)  to[in=50,out=140,loop, distance=.7cm] (0,0);
\draw[fill] (1,1) circle (2pt);
\draw (1,1.2) --node[above] {$baba$} (1,1.2);
\draw (1,1)  to[in=50,out=140,loop, distance=.7cm] (1,1);
\draw[fill] (2,0) circle (2pt);
\draw (2,0)  to[in=50,out=140,loop, distance=.7cm] (2,0);
\draw (2,0) --node[right] {$bcbc$} (2,0);
\draw[fill] (1,-1) circle (2pt);
\draw (1,-1)  to[in=50,out=140,loop, distance=.7cm] (1,-1);
\draw (1,-1) --node[below] {$bcba$} (1,-1);

\draw (0,0)--(1,1)--(2,0)--(1,-1)--(0,0);
\draw (0,0) -- (2,0);
\draw (1,1) -- (1,-1);

\draw[dashed, ultra thick, red] (0,0)--(1,1);
\draw[dotted, ultra thick, ForestGreen] (1,1)--(2,0);

\draw[dashed, ultra thick, red] (0,0)  to[in=50,out=140,loop, distance=.7cm] (0,0);
\draw[dashed, ultra thick, red] (1,1)  to[in=50,out=140,loop, distance=.7cm] (1,1);
\draw[dotted, ultra thick, ForestGreen] (1,1)  to[in=50,out=140,loop, distance=.7cm] (1,1);

\draw[dotted, ultra thick, ForestGreen] (2,0)  to[in=50,out=140,loop, distance=.7cm] (2,0);

\draw[red] (0,0) --node[below] {\tiny{$\alpha(0)$}} (0,0);
\draw[red] (1,1) --node[left] {\tiny{$\alpha(1)$}} (1,1);

\draw[ForestGreen] (1,1) --node[right] {\tiny{$\alpha'(0)$}} (1,1);
\draw[ForestGreen] (2,0) --node[below] {\tiny{$\alpha'(1)$}} (2,0);

\draw[fill] (4,0) circle (2pt);
\draw (4,0) --node[left] {$cbab$} (4,0);
\draw (4,0)  to[in=50,out=140,loop, distance=.7cm] (4,0);
\draw[fill] (5,1) circle (2pt);
\draw (5,1.2) --node[above] {$abab$} (5,1.2);
\draw (5,1)  to[in=50,out=140,loop, distance=.7cm] (5,1);
\draw[fill] (6,0) circle (2pt);
\draw (6,0)  to[in=50,out=140,loop, distance=.7cm] (6,0);
\draw (6,0) --node[right] {$cbcb$} (6,0);
\draw[fill] (5,-1) circle (2pt);
\draw (5,-1)  to[in=50,out=140,loop, distance=.7cm] (5,-1);
\draw (5,-1) --node[below] {$abcb$} (5,-1);

\draw (4,0)--(5,1)--(6,0)--(5,-1)--(4,0);
\draw (4,0) -- (6,0);
\draw (5,1) -- (5,-1);

\end{tikzpicture}$$
\caption{Here we have depicted the walks $\alpha$ and $\alpha'$.   We have drawn only the subgraph of $H^G$ induced by graph homomorphisms rather than the whole exponential graph.   }
\end{figure}
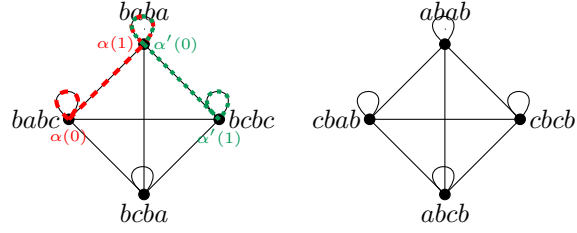

\end{example}

\begin{proposition}\label{P:ua1} The concatenation operation on homotopies is unital and associative. \end{proposition}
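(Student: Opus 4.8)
The plan is to reduce both claims directly to the corresponding facts about concatenation of looped walks, which are already in hand. By Observation \ref{O:seq}, a homotopy from $f$ to $g$ is precisely a looped walk in the exponential graph $H^G$ with endpoints $f$ and $g$, and by Definition \ref{D:conH} the concatenation of homotopies is \emph{defined} to be the concatenation of these looped walks. Consequently every structural property of homotopy concatenation is inherited from the analogous property for walks in $H^G$, and the work is entirely one of transporting earlier results through this dictionary.

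For associativity, I would take homotopies $\Lambda_1 \colon f \simeq g$, $\Lambda_2 \colon g \simeq h$, and $\Lambda_3 \colon h \simeq k$, regard them as looped walks in $H^G$ whose endpoints match up, and apply Lemma \ref{L:assoc} verbatim to obtain $(\Lambda_1 * \Lambda_2) * \Lambda_3 = \Lambda_1 * (\Lambda_2 * \Lambda_3)$ as looped walks, hence as homotopies. No further computation is needed since the associativity of walk concatenation already covers the looped case.

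For unitality, I would identify the unit at a morphism $f$ as the constant looped walk $c_f$ at the vertex $f$ of $H^G$; as a homotopy this is the length $0$ homotopy $G \cong G \times I_0^{\ell} \to H$ given by $f$ itself. The one point requiring care is that $c_f$ is a legitimate looped walk, i.e.\ that $f$ is a looped vertex of $H^G$; this is exactly Observation \ref{O:morph}, since $f$ is a graph morphism. Granting this, for any homotopy $\Lambda \colon f \simeq g$ Observation \ref{O:unit} immediately yields $c_f * \Lambda = \Lambda$ and $\Lambda * c_g = \Lambda$, which is the desired two-sided unit law.

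I do not expect a genuine obstacle here: the only subtlety is the well-definedness of the constant homotopy $c_f$ as a looped walk, resolved by the correspondence between graph morphisms and looped vertices of $H^G$. Everything else is a direct application of Lemma \ref{L:assoc} and Observation \ref{O:unit} under the identification of Observation \ref{O:seq}.
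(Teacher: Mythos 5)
Your proposal is correct and follows exactly the paper's argument: the paper's proof likewise cites Observation \ref{O:unit} for the unit and Lemma \ref{L:assoc} for associativity, under the identification of homotopies with looped walks in $H^G$. Your extra remark that the constant walk $c_f$ is a legitimate looped walk via Observation \ref{O:morph} is a small, welcome addition of care but not a different route.
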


\begin{proof}  The constant homotopy defines a unit by Observation \ref{O:unit}, and associativity is given by Lemma \ref{L:assoc}. 

\end{proof}

We now define another composition of homotopies.  

\begin{definition}[Composition of Homotopies]  \label{D:conH2} Suppose that $f, f':  G\to H$ and $g, g':  H \to K$.    Given $\alpha:  f \simeq f'$ and $\beta:  g \simeq g'$, we define  $\alpha \circ \beta$ from $gf$ to $g'f'$ as follows:  let $g \alpha = g_* \alpha$ denote the homotopy from $gf$ to $gf'$,   and $(f')^* \beta = \beta f'$ denote the homotopy from $gf'$ to $g'f'$, as defined in Lemma \ref{L:comp3}.  Then  $$\alpha \circ \beta  = g \alpha * \beta f'.$$ 
\end{definition}

\begin{example}\label{examhor}
 As in Example \ref{E:examver}, let $G=C_4, H=K=P_2$ and    let $f:G\to H$ be defined by $babc$, and  $f':G\to H$ by $baba$, with $\alpha$ the  length 1 homotopy $(ff')$.

$$\begin{tikzpicture}

\draw[fill] (3,0) circle (2pt);
\draw[blue] (3,0) circle (2.2pt);
\draw (3,0) --node[below] {$a$} (3,0);
\draw (3,0) --node[above, blue] {\tiny{$f(1)$}} (3,0);
\draw[fill] (4,0) circle (2pt);
\draw[blue] (4,0) circle (2.2pt);
\draw (4,0) --node[below] {$b$} (4,0);
\draw (4,0) --node[above, blue] {\tiny{$f(0), f(2)$}} (4,0);
\draw[fill] (5,0) circle (2pt);
\draw[blue] (5,0) circle (2.2pt);
\draw (5,0) --node[below] {$c$} (5,0);
\draw (5,0) --node[above, blue] {\tiny{$f(3)$}} (5,0);

\draw (3,0)--(4,0)--(5,0);
\draw[blue, ultra thick, dashed] (3,0)--(4,0)--(5,0);

\draw (4, -.7) -- node{$f$} (4,-.7);

\end{tikzpicture}
\ \ \ 
\begin{tikzpicture}

\draw[fill] (3,0) circle (2pt);
\draw[blue] (3,0) circle (2.2pt);
\draw (3,0) --node[below] {$a$} (3,0);
\draw (3,0) --node[left, blue] {\tiny{$f'(1)$}} (3,0);
\draw (3,0) --node[above, blue] {\tiny{$f'(3)$}} (3,0);
\draw[fill] (4,0) circle (2pt);
\draw[blue] (4,0) circle (2.2pt);
\draw (4,0) --node[below] {$b$} (4,0);
\draw (4,0) --node[above, blue] {\tiny{$f'(0), f'(2)$}} (4,0);
\draw[fill] (5,0) circle (2pt);
\draw (5,0) --node[below] {$c$} (5,0);

\draw (3,0)--(4,0)--(5,0);
\draw[blue, ultra thick, dashed] (3,0)--(4,0);

\draw (4, -.7) -- node{$f'$} (4,-.7);

\end{tikzpicture}
$$

Let $g:H\to K$ be defined by $g(a)=g(c)=b, g(b)=a$ and let $g':H\to K$ be defined by $g'(a)=g'(c)=b, g'(b)=c$, with $\beta$ the length 1 homotopy $(gg')$.

$$\begin{tikzpicture}

\draw[fill] (3,0) circle (2pt);
\draw[red] (3,0) circle (2.2pt);
\draw (3,0) --node[below] {$a$} (3,0);
\draw (3,0) --node[above left, red] {\tiny{$g(b)$}} (3,0);
\draw[fill] (4,0) circle (2pt);
\draw[red] (4,0) circle (2.2pt);
\draw (4,0) --node[below] {$b$} (4,0);
\draw (4,0) --node[above, red] {\tiny{$g(a), g(c)$}} (4,0);
\draw[fill] (5,0) circle (2pt);
\draw (5,0) --node[below] {$c$} (5,0);

\draw (3,0)--(4,0)--(5,0);
\draw[red, ultra thick, dashed] (3,0)--(4,0);

\draw (4, -.7) -- node{$g$} (4,-.7);

\end{tikzpicture}
\ 
\begin{tikzpicture}

\draw[fill] (3,0) circle (2pt);
\draw[blue] (3,0) circle (2.2pt);
\draw (3,0) --node[below] {$a$} (3,0);

\draw[fill] (4,0) circle (2pt);
\draw[blue] (4,0) circle (2.2pt);
\draw (4,0) --node[below] {$b$} (4,0);
\draw (4,0) --node[above, red] {\tiny{$g'(a), g'(c)$}} (4,0);
\draw[fill] (5,0) circle (2pt);
\draw (5,0) --node[below] {$c$} (5,0);
\draw (5,0) --node[above right, red] {\tiny{$g'(b)$}} (5,0);

\draw (3,0)--(4,0)--(5,0);
\draw[red, ultra thick, dashed] (4,0)--(5,0);

\draw (4, -.7) -- node{$g'$} (4,-.7);

\end{tikzpicture}
$$

Then $\alpha \circ \beta$ is a length $2$ homotopy $\beta_ \alpha:I_2^{\ell}\to K^G$  defined by the looped walk $(g f \,\, gf' \,\, g'f')$.   Concretely, both $gf$ and $gf'$ are given by the map $abab$ and $g'f'$ is defined by $cbcb$.  Thus  $\alpha \circ \beta$ is a length $2$ homotopy defined by the walk $(abab \,\, abab \,\, cbcb)$.  

$$\begin{tikzpicture}

\draw[fill] (0,0) circle (2pt);
\draw (0,0) --node[left] {$babc$} (0,0);
\draw (0,0)  to[in=50,out=140,loop, distance=.7cm] (0,0);
\draw[fill] (1,1) circle (2pt);
\draw (1,1.2) --node[above] {$baba$} (1,1.2);
\draw (1,1)  to[in=50,out=140,loop, distance=.7cm] (1,1);
\draw[fill] (2,0) circle (2pt);
\draw (2,0)  to[in=50,out=140,loop, distance=.7cm] (2,0);
\draw (2,0) --node[right] {$bcbc$} (2,0);
\draw[fill] (1,-1) circle (2pt);
\draw (1,-1)  to[in=50,out=140,loop, distance=.7cm] (1,-1);
\draw (1,-1) --node[below] {$bcba$} (1,-1);

\draw (0,0)--(1,1)--(2,0)--(1,-1)--(0,0);
\draw (0,0) -- (2,0);
\draw (1,1) -- (1,-1);

\draw[fill] (4,0) circle (2pt);
\draw (4,0) --node[left] {$cbab$} (4,0);
\draw (4,0)  to[in=50,out=140,loop, distance=.7cm] (4,0);
\draw[fill] (5,1) circle (2pt);
\draw (5,1.2) --node[above] {$abab$} (5,1.2);
\draw (5,1)  to[in=50,out=140,loop, distance=.7cm] (5,1);
\draw[fill] (6,0) circle (2pt);
\draw (6,0)  to[in=50,out=140,loop, distance=.7cm] (6,0);
\draw (6,0) --node[right] {$cbcb$} (6,0);
\draw[fill] (5,-1) circle (2pt);
\draw (5,-1)  to[in=50,out=140,loop, distance=.7cm] (5,-1);
\draw (5,-1) --node[below] {$abcb$} (5,-1);

\draw (4,0)--(5,1)--(6,0)--(5,-1)--(4,0);
\draw (4,0) -- (6,0);
\draw (5,1) -- (5,-1);

\draw[purple, dashed, ultra thick] (5,1)  to[in=50,out=140,loop, distance=.7cm] (5,1);
\draw[purple, dashed, ultra thick] (6,0)  to[in=50,out=140,loop, distance=.7cm] (6,0);
\draw[purple, dashed, ultra thick] (5,1) -- (6,0);

\end{tikzpicture}$$

\end{example}

  We could equally well have chosen to define the composition as $\beta f * g'\alpha$.    This is not the same homotopy;  however, we will show that the two resulting homotopies are themselves homotopic.  
  To make this notion precise, we observe that a $\times$-homotopy $\alpha$ from $f$ to $g$ is defined as a looped walk  in $(H^G)^{I_n^{\ell}}$ given by $(f h_1 h_2 h_3 \dots h_{n-1} g)$.       Then for [looped or unlooped] walks, we define the notion of homotopy rel endpoints.   The idea of fixing a subspace and allowing only homotopies which are constant on this subspace is a common one from homotopy theory, and when the fixed subspace is $A$, this is referred to as homotopy rel $A$ \cite{Hatcher}.  In our case, we will take the subspace to be the end vertices of the path graph ${v_0, v_n}$.

Let $G$ be any graph.    Recall that a looped vertex of the exponential object  $G^{P_n}$ represents a length $n$ walk in $G$, and similarly a looped vertex of $G^{I_n^{\ell}}$ represents a looped walk in $G$.   Such an   $\alpha$ is given by  $(\alpha(0) \alpha(1) \alpha(2) \dots, \alpha(n) ) = (v_0 v_1 v_2 \dots, v_n)$.  
Define $s, t:  X^{P_n} \to X$ by $s(v_0 v_1 \dots v_n) = v_0$ and $t(v_0v_1 \dots v_n ) = v_{n}$.   Note that these are NOT graph homomorphisms, just maps of vertex sets.  Thus $\alpha$ is a walk from $x$ to $y$ if $s(\alpha) = x$ and $t(\alpha) = y$.

 \begin{definition}  \label{D:horelendpt} Suppose that $\alpha, \beta $ are walks in $G$  from $x$ to $y$.   We say $\alpha$ and $\beta$ are {\bf homotopic rel endpoints}   if they are homotopic in the subgraph 
 $$(G^{P_n})_{x, y} = \{\gamma \in G^{P_n} \, | \, s(\gamma) =x \textup{ and } t(\gamma) = y \}$$ Thus two walks $\alpha = (xv_1 \dots v_{n-1}y) $ and $\beta = (xw_1 \dots w_{n-1}y)$ are homotopic rel endpoints if there is a looped walk of walks in $G^{P_n}$ given by  $\Lambda = (\alpha \lambda_1\lambda_2 \dots \lambda_{k-1} \beta) $  where each walk $\lambda_i$ starts at $x$ and ends at $y$.\end{definition}

For looped walks, we make the same definitions in $G^{I_n^{\ell}}$.  

Now we apply this notion to $\times$-homotopies, viewed as looped walks in $(H^G)^{I_n^{\ell}}$.

\begin{definition}  \label{D:hoofho} Two $\times$-homotopies $\alpha, \alpha'$ from $f$ to $g$ are themselves homotopic if they are homotopic rel endpoints viewed as looped walks in  $(H^G)^{I_n^{\ell}}$.

\end{definition}
  
  \begin{proposition}\label{P:interchange} Suppose that $f, f':  G\to H$ and $g, g':  H \to K$.    Given $\alpha:  f \simeq f'$ and $\beta:  g \simeq g'$,  the two homotopies  defined by $g \alpha * \beta f'$ and $\beta f * g' \alpha$ are homotopic. \end{proposition}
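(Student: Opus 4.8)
The plan is to realize both composites as the two boundary paths of a single ``square'' homotopy, and then slide one boundary onto the other through elementary local moves. Write $\alpha$ as a looped walk $(f = a_0, a_1, \dots, a_m = f')$ in $H^G$ of length $m$ and $\beta$ as a looped walk $(g = b_0, \dots, b_p = g')$ in $K^H$ of length $p$. Both $g\alpha * \beta f'$ and $\beta f * g'\alpha$ then have length $m+p$, so they are genuinely comparable as looped walks in $(K^G)^{I_{m+p}^{\ell}}$ and Definition \ref{D:hoofho} applies.

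First I would assemble the square. Using Proposition \ref{P:adj}, view $\alpha,\beta$ as morphisms $\tilde\alpha\colon G\times I_m^{\ell}\to H$ and $\tilde\beta\colon H\times I_p^{\ell}\to K$, and set
$$\Psi = \tilde\beta\circ(\tilde\alpha\times\id_{I_p^{\ell}})\colon G\times I_m^{\ell}\times I_p^{\ell}\to K,$$
which is a graph morphism as a composite of morphisms, with $\Psi(v,s,t) = b_t(a_s(v))$. Its adjoint $\hat\Psi\colon I_m^{\ell}\times I_p^{\ell}\to K^G$ sends the grid vertex $(s,t)$ to the morphism $b_t a_s$; since every vertex of $I_m^{\ell}\times I_p^{\ell}$ is looped, each lands on a looped vertex of $K^G$, i.e. a genuine morphism, by Observation \ref{O:morph}.

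Next comes the bookkeeping step: reading $\hat\Psi$ along the boundary of the grid should recover the two composites. Along the bottom edge $(s,0)$ one gets $ga_s$, giving $g\alpha$; along the right edge $(m,t)$ one gets $b_t f'$, giving $\beta f'$; so the ``bottom-then-right'' lattice path $(0,0)\to(m,0)\to(m,p)$, post-composed with $\hat\Psi$, is exactly $g\alpha * \beta f'$. Symmetrically, the left edge gives $\beta f$ and the top edge gives $g'\alpha$, so the ``left-then-top'' path $(0,0)\to(0,p)\to(m,p)$ yields $\beta f * g'\alpha$. I expect this matching of the restrictions of $\Psi$ against the induced homotopies $g_*\alpha$, $(f')^*\beta$, etc. of Definition \ref{D:conH2} to be the most delicate (though routine) part, since it requires carefully unwinding the adjunction.

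Finally I would show the two boundary paths are homotopic rel endpoints already inside $I_m^{\ell}\times I_p^{\ell}$, and then push the homotopy forward. Both boundary paths are monotone lattice paths from $(0,0)$ to $(m,p)$; the bottom-then-right word $R^mU^p$ can be carried to the left-then-top word $U^pR^m$ by $mp$ adjacent transpositions, each a single ``box flip'' replacing a subwalk $(a,b)\,(a{+}1,b)\,(a{+}1,b{+}1)$ by $(a,b)\,(a,b{+}1)\,(a{+}1,b{+}1)$ and altering the walk at exactly one index. I would check that such a flip is a single edge in $(I_m^{\ell}\times I_p^{\ell})^{I_{m+p}^{\ell}}$ rel endpoints: only the three indices adjacent to the flipped vertex need inspection, and each required adjacency holds because one coordinate moves along a loop while the other moves along an edge of the looped path. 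Concatenating these $mp$ length-$1$ homotopies gives a homotopy rel endpoints between the two boundary paths in the grid, and applying $\hat\Psi_*\colon (I_m^{\ell}\times I_p^{\ell})^{I_{m+p}^{\ell}}\to (K^G)^{I_{m+p}^{\ell}}$ from Lemma \ref{L:comp1} transports it to a homotopy rel endpoints between $g\alpha*\beta f'$ and $\beta f * g'\alpha$, since post-composition by $\hat\Psi$ fixes the endpoints $gf$ and $g'f'$. The main obstacle is precisely the box-flip adjacency check together with the boundary identification; everything else is formal.
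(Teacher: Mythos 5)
Your proof is correct, and at its core it is the same argument the paper gives: both proofs view the situation as an $m\times p$ grid of composite morphisms $b_t a_s$ in $K^G$ and deform the bottom-then-right lattice path into the left-then-top one by $mp$ elementary box flips (the paper's figures are exactly your monotone lattice paths). The difference is in where the adjacency check is performed. The paper verifies the single-square case directly in $K^G$ --- checking $gf \con gf'$, $gf \con g'f$, etc.\ via Lemma \ref{L:comp3}, plus a hands-on check of the diagonal loop condition $g'f \con gf'$ --- and then asserts that the general case follows by ``repeatedly swapping squares.'' You instead prove the flip once and for all in the model grid $I_m^{\ell}\times I_p^{\ell}$, where it reduces to the observation that each coordinate of a flipped triple moves along either a loop or an edge of a looped path, and then transport the whole homotopy rel endpoints through the adjoint $\hat\Psi\colon I_m^{\ell}\times I_p^{\ell}\to K^G$ using Lemma \ref{L:comp1}. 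This buys you a cleaner separation of the combinatorics from the graph-theoretic content and makes the paper's inductive ``swap squares'' step fully explicit; the cost is the extra bookkeeping of the adjunction and the boundary identification, which the paper avoids by working in $K^G$ from the start. One small point to make sure you state: the flips never touch the indices $0$ and $m+p$ of the walk (the corners $(0,0)$ and $(m,p)$ lie on every monotone lattice path), which is what guarantees the homotopy is rel endpoints before you push it forward.
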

  
  \begin{proof}   First, suppose that both $\alpha$ and $\beta $ are length $1$ homotopies, so that there are edges $f \con f'$ and $g \con g'$.  We consider the two length $2$ homotopies $g \alpha* \beta f' = (gf \, gf' \,  g'f')$, and $ \beta f * g' \alpha = (gf \, g'f \, g'f')$.    We want to show that these are homotopic.  In fact, we claim that they are connected by an edge in $K^G$.  Since $I^{\ell}_2$ has edges connecting $ 0\con 1$ and $1 \con 2$,   this requires that $(g \alpha * \beta f')(i) \con (\beta f * g'\alpha )({i+1}) $ and $(g \alpha * \beta f')(i+1) \con (\beta f * g'\alpha )({i}) $ for $i = 0, 1$.  So     there are four conditions to check.  Decoding them, they are:  $gf \con gf', gf \con g'f, g'f \con g'f'$ and $gf' \con g'f'$.  Each of these holds by   Lemma \ref{L:comp3}.  Lastly, we consider the loops $i \con i$:  for $i = 0, 2$ we have $\alpha(i) = \beta(i)$, and since these are looped vertices, $\alpha(i) \con \beta(i)$.  For $i = 1$, we have $\alpha(1) = g'f$ and $\beta(1) = gf'$.  If $v \con w \in E(G)$, then $f(v) \con f'(w) \in E(H)$ and hence $g'f(v) \con gf'(w) \in E(K) $, verifying the last condition.     Observe that this length $1$ homotopy fixes the endpoints, and thus we have a homotopy of homotopies (that is, the homotopies are homotopic rel endpoints).

$$\begin{tikzpicture}
\draw[fill] (0,0) circle (2pt);
\draw[fill] (1,0) circle (2pt);
\draw[fill] (0,1) circle (2pt);
\draw[fill] (1,1) circle (2pt);
\draw (0,0)  to[in=50,out=140,loop, distance=.7cm] (0,0);
\draw (1,0)  to[in=50,out=140,loop, distance=.7cm] (1,0);
\draw (0,1)  to[in=50,out=140,loop, distance=.7cm] (0,1);
\draw (1,1)  to[in=50,out=140,loop, distance=.7cm] (1,1);

\draw (0,0) -- (0,1) -- (1,1) -- (0,0) -- (1,0) -- (1,1);
\draw (0,1) -- (1,0);

\draw (0,1.75) --node {$f$} (0,1.75);
\draw (1,1.75) --node {$f'$} (1,1.75);
\draw (-.75,1) --node {$g$} (-.75,1);
\draw (-.75,0) --node {$g'$} (-.75,0);


\draw[blue, ultra thick, dotted] (0,0)  to[in=50,out=140,loop, distance=.7cm] (0,0);
\draw[blue, ultra thick, dotted] (1,0)  to[in=50,out=140,loop, distance=.7cm] (1,0);
\draw[blue, ultra thick, dotted] (0,1)  to[in=50,out=140,loop, distance=.7cm] (0,1);

\draw[blue, ultra thick, dotted] (0,1) -- (0,0) -- (1,0);

\draw[blue] (.5,-.75) --node {$\beta f * g'\alpha$} (.5,-.75);

\end{tikzpicture}
\ \ \ \ \ \ \ \ \ \ \ \ \ \ \ 
\begin{tikzpicture}
\draw[fill] (0,0) circle (2pt);
\draw[fill] (1,0) circle (2pt);
\draw[fill] (0,1) circle (2pt);
\draw[fill] (1,1) circle (2pt);
\draw (0,0)  to[in=50,out=140,loop, distance=.7cm] (0,0);
\draw (1,0)  to[in=50,out=140,loop, distance=.7cm] (1,0);
\draw (0,1)  to[in=50,out=140,loop, distance=.7cm] (0,1);
\draw (1,1)  to[in=50,out=140,loop, distance=.7cm] (1,1);

\draw (0,0) -- (0,1) -- (1,1) -- (0,0) -- (1,0) -- (1,1);
\draw (0,1) -- (1,0);

\draw (0,1.75) --node {$f$} (0,1.75);
\draw (1,1.75) --node {$f'$} (1,1.75);
\draw (-.75,1) --node {$g$} (-.75,1);
\draw (-.75,0) --node {$g'$} (-.75,0);

\draw[red, ultra thick, dashed] (1,0)  to[in=50,out=140,loop, distance=.7cm] (1,0);
\draw[red, ultra thick, dashed] (0,1)  to[in=50,out=140,loop, distance=.7cm] (0,1);
\draw[red, ultra thick, dashed] (1,1)  to[in=50,out=140,loop, distance=.7cm] (1,1);
\draw[red, ultra thick, dashed] (0,1) -- (1,1) -- (1,0);



\draw[red]  (.5,-.75) --node {$g\alpha*\beta f'$} (.5,-.75);

\end{tikzpicture}
$$

  Now if $\alpha$ and $\beta$ are homotopies of length $n$ and $m$, each of them is defined by a looped walk $(f f_1  f_2  f_3  \dots  f_{n-1}  f')$ and $(g  g_1  g_2  g_3 \dots  g_{m-1}  g')$.    Since each successive pair is connected, the outer edges of each square are connected by an edge, ie a length 1 homotopy, and we can repeatedly swap squares and get a length $nm$ homotopy rel endpoints between    $g \alpha * \beta f'$ and $\beta f * g' \alpha$.

$$\begin{tikzpicture}
\draw[fill] (0,0) circle (2pt);
\draw[fill] (1,0) circle (2pt);
\draw[fill] (0,1) circle (2pt);
\draw[fill] (1,1) circle (2pt);
\draw[fill] (2,0) circle (2pt);
\draw[fill] (2,1) circle (2pt);
\draw[fill] (0,2) circle (2pt);
\draw[fill] (1,2) circle (2pt);
\draw[fill] (2,2) circle (2pt);

\draw[dashed] (0,0) -- (0,1) -- (0,2);
\draw[dashed] (1,0) -- (1,1) -- (1,2);
\draw[dashed] (2,0) -- (2,1) -- (2,2);

\draw[dashed] (0,0) -- (1,0) -- (2,0);
\draw[dashed] (0,1) -- (1,1) -- (2,1);
\draw[dashed] (0,2) -- (1,2) -- (2,2);

\draw(0,2.5) -- node{$f_0$} (0,2.5);
\draw(1,2.5) -- node{$f_1$} (1,2.5);
\draw(2,2.5) -- node{$f_2$} (2,2.5);

\draw(-.5,2) -- node{$g_0$} (-.5,2);
\draw(-.5,1) -- node{$g_1$} (-.5,1);
\draw(-.5,0) -- node{$g_2$} (-.5,0);



\draw[ultra thick, orange] (0,2)--(1,2)--(2,2)--(2,1)--(2,0);


\end{tikzpicture}
\ 
\begin{tikzpicture}
\draw[fill] (0,0) circle (2pt);
\draw[fill] (1,0) circle (2pt);
\draw[fill] (0,1) circle (2pt);
\draw[fill] (1,1) circle (2pt);
\draw[fill] (2,0) circle (2pt);
\draw[fill] (2,1) circle (2pt);
\draw[fill] (0,2) circle (2pt);
\draw[fill] (1,2) circle (2pt);
\draw[fill] (2,2) circle (2pt);

\draw[dashed] (0,0) -- (0,1) -- (0,2);
\draw[dashed] (1,0) -- (1,1) -- (1,2);
\draw[dashed] (2,0) -- (2,1) -- (2,2);

\draw[dashed] (0,0) -- (1,0) -- (2,0);
\draw[dashed] (0,1) -- (1,1) -- (2,1);
\draw[dashed] (0,2) -- (1,2) -- (2,2);

\draw(0,2.5) -- node{$f_0$} (0,2.5);
\draw(1,2.5) -- node{$f_1$} (1,2.5);
\draw(2,2.5) -- node{$f_2$} (2,2.5);

\draw(-.5,2) -- node{$g_0$} (-.5,2);
\draw(-.5,1) -- node{$g_1$} (-.5,1);
\draw(-.5,0) -- node{$g_2$} (-.5,0);



\draw[ultra thick, orange] (0,2)--(1,2)--(1,1)--(2,1)--(2,0);


\end{tikzpicture}
\ 
\begin{tikzpicture}
\draw[fill] (0,0) circle (2pt);
\draw[fill] (1,0) circle (2pt);
\draw[fill] (0,1) circle (2pt);
\draw[fill] (1,1) circle (2pt);
\draw[fill] (2,0) circle (2pt);
\draw[fill] (2,1) circle (2pt);
\draw[fill] (0,2) circle (2pt);
\draw[fill] (1,2) circle (2pt);
\draw[fill] (2,2) circle (2pt);

\draw[dashed] (0,0) -- (0,1) -- (0,2);
\draw[dashed] (1,0) -- (1,1) -- (1,2);
\draw[dashed] (2,0) -- (2,1) -- (2,2);

\draw[dashed] (0,0) -- (1,0) -- (2,0);
\draw[dashed] (0,1) -- (1,1) -- (2,1);
\draw[dashed] (0,2) -- (1,2) -- (2,2);

\draw(0,2.5) -- node{$f_0$} (0,2.5);
\draw(1,2.5) -- node{$f_1$} (1,2.5);
\draw(2,2.5) -- node{$f_2$} (2,2.5);

\draw(-.5,2) -- node{$g_0$} (-.5,2);
\draw(-.5,1) -- node{$g_1$} (-.5,1);
\draw(-.5,0) -- node{$g_2$} (-.5,0);



\draw[ultra thick, orange] (0,2)--(0,1)--(1,1)--(2,1)--(2,0);


\end{tikzpicture}
\ 
\begin{tikzpicture}
\draw[fill] (0,0) circle (2pt);
\draw[fill] (1,0) circle (2pt);
\draw[fill] (0,1) circle (2pt);
\draw[fill] (1,1) circle (2pt);
\draw[fill] (2,0) circle (2pt);
\draw[fill] (2,1) circle (2pt);
\draw[fill] (0,2) circle (2pt);
\draw[fill] (1,2) circle (2pt);
\draw[fill] (2,2) circle (2pt);

\draw[dashed] (0,0) -- (0,1) -- (0,2);
\draw[dashed] (1,0) -- (1,1) -- (1,2);
\draw[dashed] (2,0) -- (2,1) -- (2,2);

\draw[dashed] (0,0) -- (1,0) -- (2,0);
\draw[dashed] (0,1) -- (1,1) -- (2,1);
\draw[dashed] (0,2) -- (1,2) -- (2,2);

\draw(0,2.5) -- node{$f_0$} (0,2.5);
\draw(1,2.5) -- node{$f_1$} (1,2.5);
\draw(2,2.5) -- node{$f_2$} (2,2.5);

\draw(-.5,2) -- node{$g_0$} (-.5,2);
\draw(-.5,1) -- node{$g_1$} (-.5,1);
\draw(-.5,0) -- node{$g_2$} (-.5,0);



\draw[ultra thick, orange] (0,2)--(0,1)--(1,1)--(1,0)--(2,0);


\end{tikzpicture}
\ 
\begin{tikzpicture}
\draw[fill] (0,0) circle (2pt);
\draw[fill] (1,0) circle (2pt);
\draw[fill] (0,1) circle (2pt);
\draw[fill] (1,1) circle (2pt);
\draw[fill] (2,0) circle (2pt);
\draw[fill] (2,1) circle (2pt);
\draw[fill] (0,2) circle (2pt);
\draw[fill] (1,2) circle (2pt);
\draw[fill] (2,2) circle (2pt);

\draw[dashed] (0,0) -- (0,1) -- (0,2);
\draw[dashed] (1,0) -- (1,1) -- (1,2);
\draw[dashed] (2,0) -- (2,1) -- (2,2);

\draw[dashed] (0,0) -- (1,0) -- (2,0);
\draw[dashed] (0,1) -- (1,1) -- (2,1);
\draw[dashed] (0,2) -- (1,2) -- (2,2);

\draw(0,2.5) -- node{$f_0$} (0,2.5);
\draw(1,2.5) -- node{$f_1$} (1,2.5);
\draw(2,2.5) -- node{$f_2$} (2,2.5);

\draw(-.5,2) -- node{$g_0$} (-.5,2);
\draw(-.5,1) -- node{$g_1$} (-.5,1);
\draw(-.5,0) -- node{$g_2$} (-.5,0);



\draw[ultra thick, orange] (0,2)--(0,1)--(0,0)--(1,0)--(2,0);


\end{tikzpicture}$$

  \end{proof}

\begin{proposition}\label{P:ua2} The composition operation on homotopies is unital and associative. \end{proposition}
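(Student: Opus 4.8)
The plan is to prove both statements by direct computation, reducing everything to the already-established behaviour of concatenation of walks. Recall from Definition \ref{D:conH2} that $\alpha \circ \beta = g\alpha * \beta f'$ is built from the two ``whiskered'' homotopies $g\alpha = g_*\alpha$ and $\beta f' = (f')^*\beta$ together with the concatenation $*$. Since concatenation is already known to be associative (Lemma \ref{L:assoc}) and to have the constant walks as units (Observation \ref{O:unit}), the whole argument comes down to tracking how the whiskering operators $g_*$ (Lemma \ref{L:comp1}) and $(f')^*$ (Lemma \ref{L:comp2}) interact with $*$ and with one another, and then expanding both sides of each identity as looped walks in the appropriate exponential graph.

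For unitality, I would take the identity $2$-cells to be the constant homotopies $c_{\id_G}$ at the identity morphisms. Applying Definition \ref{D:conH2} to $c_{\id_G} \circ \alpha$ gives $f_* c_{\id_G} * (\id_G)^*\alpha$. The first factor is the constant homotopy $c_f$, since post-composing a constant looped walk with $f$ is again constant, and the second factor is just $\alpha$, since precomposition by $\id_G$ does nothing; by Observation \ref{O:unit} this concatenation collapses to $\alpha$. Symmetrically, $\alpha \circ c_{\id_H} = (\id_H)_*\alpha * (f')^* c_{\id_H} = \alpha * c_{f'} = \alpha$. This establishes both unit laws.

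For associativity, given $\alpha \colon f \simeq f'$, $\beta \colon g \simeq g'$, $\gamma \colon h \simeq h'$ composable as $G \to H \to K \to L$, I would expand each of $(\alpha \circ \beta) \circ \gamma$ and $\alpha \circ (\beta \circ \gamma)$ using the definition. The first becomes $h_*(g\alpha * \beta f') * \gamma(g'f')$ and the second becomes $(hg)\alpha * (h\beta * \gamma g')f'$. The key facts needed to match them are: functoriality of the whiskering operators, $h_* g_* = (hg)_*$ and $(f')^*(g')^* = (g'f')^*$ (immediate from Lemmas \ref{L:comp1} and \ref{L:comp2}); the commutativity $h_*(f')^* = (f')^* h_*$ of post- and pre-composition; distributivity of each whiskering operator over $*$ (Lemma \ref{L:distr}); and associativity of $*$ (Lemma \ref{L:assoc}). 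Applying these, both expressions reduce to the single threefold concatenation $(hg)_*\alpha * (f')^*(h_*\beta) * (f')^*(g')^*\gamma$, so the two are equal, not merely homotopic.

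The point to be careful about --- rather than a genuine obstacle --- is the bookkeeping: keeping straight which morphism is whiskered on which side at each stage, and verifying that the distributivity and commutativity of the whiskering operators are exactly the instances supplied by Lemmas \ref{L:distr}, \ref{L:comp1}, and \ref{L:comp2}. Once this is organized, the identities are strict equalities of looped walks, and no appeal to the homotopy-of-homotopies notion of Definition \ref{D:hoofho} is required.
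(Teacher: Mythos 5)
Your proof is correct and follows essentially the same route as the paper's: both reduce unitality to Observation \ref{O:unit} and associativity to the distributivity and associativity of concatenation (Lemmas \ref{L:distr} and \ref{L:assoc}), obtaining strict equalities of looped walks. Your unit case is if anything stated more cleanly than the paper's, since you test against the constant homotopies at the identity morphisms, which is the precise 2-categorical unit law.
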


\begin{proof}  Unital:    If $\alpha$ is the constant homotopy at $f$, then  $g \alpha$ is just constant at $gf$, and $g \alpha * \beta f' = \beta f'$ by Observation \ref{O:unit}.  Similarly if $\beta$ is the constant homotopy at $gf'$, then $\beta f' = f'$ and $g \alpha * \beta f' = \beta f'$.  

Associative:    Suppose we have homotopies  $\alpha:  f \simeq f', \beta: g \simeq g'$ and $\gamma:  h \simeq h'$.    Then the distributive property of Lemma \ref{L:distr} and the associative property of  Lemma \ref{L:assoc}  give:  
\begin{align*}  (\alpha \circ \beta)  \circ \gamma & = ( g \alpha * \beta f') \circ \gamma \\ 
& =  h(g \alpha * \beta f') * \gamma g'f' \\
& = (hg  \alpha * h \beta f') * \gamma g'f'\\
& =hg  \alpha *(  h \beta f' * \gamma g'f' ) \\ 
& = hg \alpha *  (h \beta *  \gamma g')f' \\
& =  \alpha \circ  (h \beta * \gamma g') \\ & = \alpha \circ  (\beta \circ   \gamma)
\end{align*}  
\end{proof}

 Recall that our aim is to  show that $\Gph$ forms a 2-category as defined in \cite{riehlCTIC}.   We want our 2-cells to be defined by $\times$-homotopies of morphisms, but this does not  satisfy the required properties.   However, since a homotopy $\alpha$ is defined by a looped walk given by a  map  $\alpha:  I_n^{\ell} \to H^G$, we have a notion of when two such maps are themselves homotopic, as in Definiton  \ref{D:hoofho}.  In  order to get a  2-category, we will define our 2-cells to be homotopy classes of $\times$-homotopies.  

  We begin by showing that  concatenation and composition operations are well defined up to homotopy.
We will  use the following more general result about homotopies of walks:  

\begin{lemma}\label{L:conhtpy}  If $f$ and $g$ are looped walks of length $n$ in $G$ from $x$ to $y$, and $f \simeq g$ are homotopic rel endpoints, then if $h$ is a walk from $y$ to $z$, then $f*h \simeq g*h$ rel endpoints; and if $k$ is a walk from $w$ to $x$, then  $k*f \simeq k*g$ rel endpoints.   
\end{lemma}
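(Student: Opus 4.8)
The plan is to realize right-concatenation with $h$ as a single graph morphism between the relevant endpoint-fixed subgraphs of exponential objects, and then obtain the desired homotopy simply as the image of the given one. Recall from Definition \ref{D:horelendpt} that $f \simeq g$ rel endpoints is witnessed by a looped walk $\Lambda = (\lambda_0 \lambda_1 \dots \lambda_k)$ in $(G^{I_n^{\ell}})_{x,y}$, with $\lambda_0 = f$, $\lambda_k = g$, and each $\lambda_i$ a looped walk from $x$ to $y$. Writing $m$ for the length of $h$, I would define a vertex map $\Phi_h \colon (G^{I_n^{\ell}})_{x,y} \to (G^{I_{n+m}^{\ell}})_{x,z}$ by $\Phi_h(\gamma) = \gamma * h$. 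Since every $\gamma$ in the source ends at $y = h(0)$, the concatenation of Definition \ref{D:concat} is defined, and $s(\gamma * h) = \gamma(0) = x$ while $t(\gamma * h) = h(m) = z$, so $\Phi_h$ genuinely lands in the stated subgraph.

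The heart of the argument is to check that $\Phi_h$ is a graph morphism, i.e.\ that $\gamma \con \gamma'$ in $(G^{I_n^{\ell}})_{x,y}$ implies $\gamma * h \con \gamma' * h$. By Definition \ref{D:exp} this amounts to verifying, for each edge of $I_{n+m}^{\ell}$, the conditions $(\gamma * h)(j) \con (\gamma' * h)(j+1)$, together with $(\gamma*h)(j+1) \con (\gamma'*h)(j)$ and the loop conditions $(\gamma*h)(j) \con (\gamma'*h)(j)$ forced by the unordered edges and the self-loops of $I_{n+m}^{\ell}$. I would split the check into three ranges: for $j < n$ these are exactly the conditions packaged in the edge $\gamma \con \gamma'$ of $G^{I_n^{\ell}}$; for $j > n$ they reduce to the statement that $h$ is a (looped) walk; and at the junction $j = n$ they reduce to $\gamma(n) = \gamma'(n) = y = h(0) \con h(1)$, which holds because both walks end at $y$ and $h$ is a walk out of $y$. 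This junction case, plus the bookkeeping of both orderings and the loop conditions, is the only place that requires attention; everything else is immediate from the hypotheses.

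Once $\Phi_h$ is known to be a graph morphism, the composite $\Phi_h \circ \Lambda \colon I_k^{\ell} \to (G^{I_{n+m}^{\ell}})_{x,z}$ is again a looped walk, running from $\Phi_h(f) = f * h$ to $\Phi_h(g) = g * h$, and this is precisely a homotopy rel endpoints exhibiting $f * h \simeq g * h$. The left-concatenation statement is handled symmetrically: writing $p$ for the length of $k$, I would set $\Psi_k(\gamma) = k * \gamma$ on $(G^{I_n^{\ell}})_{x,y}$, observe it lands in $(G^{I_{p+n}^{\ell}})_{w,y}$, and run the identical three-range verification, where now the junction at index $p$ uses $k(p) = x \con \gamma(1)$ since $\gamma$ is a walk out of $x$. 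I expect no genuine obstacle beyond this boundary bookkeeping at the junction vertex; the conceptual content is entirely that concatenation with a fixed walk preserves the adjacency relation of the exponential object.
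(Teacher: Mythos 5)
Your proposal is correct and follows essentially the same route as the paper: both arguments reduce the claim to showing that concatenation with the fixed walk $h$ preserves adjacency in the exponential object, verified by splitting the edges of $I_{n+m}^{\ell}$ into index ranges (the paper applies $-*h$ directly to each stage of the homotopy, while you package the same computation as a graph morphism $\Phi_h$ between the endpoint-fixed subgraphs and push the homotopy forward). If anything, your explicit treatment of the junction index $j=n$, where one endpoint of the edge is governed by $\gamma$ and the other by $h$, is slightly more careful than the paper's two-case split.
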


\begin{proof}   We have $f$ and $g$ representing vertices in $G^{I_n^{\ell}}$, and $\alpha$  a length $m$ homotopy from $f$ to $g$. So   $\alpha$ is defined by a looped walk  $(f  f_1 f_2 \dots f_{n-1}  g)$  in $G^{I_n^{\ell}}$.   
Now suppose that  $h$ is a walk from $y$ to $z$.  Define a sequence $(f_0*h \,\, f_1*h \,\, f_2*h \dots f_n*h)$ in $G^{I_{n+m}^{\ell}}$, where each of these is a walk from $x$ to $z$.   We claim that each successive pair of these is connected by an edge  in $G^{I_{n+m}^{\ell}}$.  
The requirement for this edge to exist is that given any edge $v_i \con v_{i+1}$ in $I_{n+m}^{\ell}$,    we have $(f_k*h)(v_i) \con (f_{k+1}*h)(v_{i+1})$.    By definition of concatenation, if $i \leq n$ these are defined by $f_k(v_i)$ and $f_{k+1}(v_{i+1})$, which are connected in $G$ since $f_k \con f_{k+1}$;  if $i > n$, these are defined by  $h(v_i) $ and $h(v_{i+1})$, which are connected since $h$ is a walk in $G$.  
Thus $f*h \simeq g*h$ rel endpoints.  The other case follows by an analogous argument.  
\end{proof}

\begin{corollary}  \label{C:htht1}   If $\alpha \simeq \alpha'$ are homotopic as homotopies (ie homotopic rel endpoints) and $\beta \simeq \beta'$ as homotopies, then $\alpha * \beta \simeq \alpha' * \beta'$. 
\end{corollary}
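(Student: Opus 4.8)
The plan is to deduce the statement directly from Lemma \ref{L:conhtpy} together with transitivity, by treating the homotopies $\alpha, \alpha', \beta, \beta'$ as looped walks in the exponential object $H^G$. The key observation to set up is that, by Definition \ref{D:hoofho}, saying two homotopies from $f$ to $g$ are homotopic \emph{as homotopies} is exactly saying that they are homotopic rel endpoints when regarded as looped walks in $H^G$ (the ambient exponential for this rel-endpoint homotopy being $(H^G)^{I_n^{\ell}}$). Consequently, Lemma \ref{L:conhtpy}, which is stated for looped walks in an arbitrary graph, applies verbatim with that arbitrary graph taken to be $H^G$.

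Concretely, suppose $\alpha, \alpha': f \simeq g$ and $\beta, \beta': g \simeq h$, so that $\alpha * \beta$ and $\alpha' * \beta'$ are both homotopies from $f$ to $h$ by Definition \ref{D:conH}. The hypothesis $\alpha \simeq \alpha'$ rel endpoints forces $\alpha$ and $\alpha'$ to share a common length $n$, and likewise $\beta, \beta'$ share a common length $m$; hence $\alpha * \beta$, $\alpha' * \beta$, and $\alpha' * \beta'$ are all looped walks of length $n+m$ from $f$ to $h$ in $H^G$, and comparisons rel endpoints among them make sense.

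First I would apply the right-concatenation half of Lemma \ref{L:conhtpy} to $\alpha \simeq \alpha'$ (rel endpoints) with the fixed walk $\beta$ from $g$ to $h$, obtaining $\alpha * \beta \simeq \alpha' * \beta$ rel endpoints. Next I would apply the left-concatenation half of the same lemma to $\beta \simeq \beta'$ (rel endpoints) with the fixed walk $\alpha'$ from $f$ to $g$, obtaining $\alpha' * \beta \simeq \alpha' * \beta'$ rel endpoints. Finally, transitivity of homotopy rel endpoints (which is an equivalence relation, by the same constant-walk, reversal, and concatenation arguments underlying Observation \ref{O:htpyequiv}) chains these two together to yield $\alpha * \beta \simeq \alpha' * \beta'$.

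There is essentially no computation here; the only point requiring care is the bookkeeping that identifies ``homotopic as homotopies'' with ``homotopic rel endpoints in $H^G$'' so that Lemma \ref{L:conhtpy} may be invoked, and the verification that the intermediate walk $\alpha' * \beta$ has endpoints $f$ and $h$ and length $n+m$ matching both $\alpha * \beta$ and $\alpha' * \beta'$, so that both applications of the lemma and the final transitivity step are legitimate. This is the only place I expect any (minor) friction.
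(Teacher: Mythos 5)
Your proposal is correct and is exactly the argument the paper intends: the corollary is stated without proof immediately after Lemma \ref{L:conhtpy}, and the intended derivation is precisely your two applications of that lemma (first concatenating $\beta$ on the right of $\alpha \simeq \alpha'$, then $\alpha'$ on the left of $\beta \simeq \beta'$) followed by transitivity of homotopy rel endpoints. Your added bookkeeping about identifying ``homotopic as homotopies'' with ``homotopic rel endpoints as looped walks in $H^G$'' via Definition \ref{D:hoofho} is a faithful and slightly more careful rendering of the same route.
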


\begin{lemma}\label{L:htht2}  If $\alpha \simeq \alpha'$ and $\beta \simeq \beta'$ then $\alpha \circ \beta \simeq \alpha' \circ  \beta'$. 
\end{lemma}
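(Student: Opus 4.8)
The plan is to reduce the statement to Corollary \ref{C:htht1} together with the fact that the two functorial ``whiskering'' operations used to define $\circ$ each preserve homotopy of homotopies. Recall from Definition \ref{D:conH2} that $\alpha \circ \beta = g\alpha * \beta f'$ and $\alpha' \circ \beta' = g\alpha' * \beta' f'$, where $g\alpha = g_*\alpha$ is obtained from the morphism $g_* : H^G \to K^G$ of Lemma \ref{L:comp1} and $\beta f' = (f')^*\beta$ from the morphism $(f')^* : K^H \to K^G$ of Lemma \ref{L:comp2}. Since $\alpha,\alpha'$ have a common length $n$ and $\beta,\beta'$ a common length $m$, both composites have length $n+m$, so Corollary \ref{C:htht1} tells us it is enough to prove the two whiskering statements $g\alpha \simeq g\alpha'$ rel endpoints and $\beta f' \simeq \beta' f'$ rel endpoints; concatenating these two homotopies of homotopies then yields $\alpha \circ \beta \simeq \alpha'\circ\beta'$.

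To prove $g\alpha \simeq g\alpha'$, I would unwind Definition \ref{D:hoofho}: a homotopy of homotopies from $\alpha$ to $\alpha'$ is a looped walk $\Gamma = (\alpha\,\lambda_1\cdots\lambda_{k-1}\,\alpha')$ in $(H^G)^{I_n^\ell}$ in which every $\lambda_i$ runs from $f$ to $f'$. Applying Lemma \ref{L:comp1} a second time, now to the morphism $g_* : H^G \to K^G$, produces an induced morphism $(g_*)_* : (H^G)^{I_n^\ell} \to (K^G)^{I_n^\ell}$, and pushing $\Gamma$ forward along it yields the looped walk $(g\alpha\,\,g\lambda_1\cdots g\alpha')$ in $(K^G)^{I_n^\ell}$. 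The point is that $(g_*)_*$ acts term-by-term and, because $g_*$ is a graph morphism, it carries each walk $\lambda_i$ from $f$ to $f'$ to a walk $g\lambda_i$ from $gf$ to $gf'$; hence the pushed-forward walk is a homotopy rel endpoints, giving $g\alpha \simeq g\alpha'$. The statement $\beta f' \simeq \beta' f'$ is entirely symmetric, using Lemma \ref{L:comp2} to obtain $(f')^* : K^H \to K^G$ and then Lemma \ref{L:comp1} to induce $((f')^*)_* : (K^H)^{I_m^\ell} \to (K^G)^{I_m^\ell}$, which sends a homotopy of homotopies from $\beta$ to $\beta'$ to one from $\beta f'$ to $\beta' f'$ with every intermediate walk running from $gf'$ to $g'f'$.

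The one point requiring care --- and the main obstacle --- is the bookkeeping verifying that these induced morphisms preserve the rel-endpoints condition: that post-composition with $g_*$ sends a looped walk from $f$ to $f'$ to one from $gf$ to $gf'$, and likewise that $(f')^*$ sends a walk from $g$ to $g'$ to one from $gf'$ to $g'f'$, so that the image genuinely lies in the subgraph of walks with the prescribed endpoints. This is routine once one observes that applying a graph morphism to a walk merely relabels its vertices and hence commutes with the endpoint maps $s$ and $t$ of Definition \ref{D:horelendpt}. It may be cleanest to isolate this observation as a short general lemma --- post-composition with any graph morphism preserves homotopy of walks rel endpoints --- and then invoke it twice. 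With both whiskering statements established, Corollary \ref{C:htht1} completes the proof.
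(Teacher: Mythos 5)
Your proposal is correct and follows essentially the same route as the paper: the paper likewise reduces the claim to the two whiskering statements $g\alpha \simeq g\alpha'$ and $\beta f' \simeq \beta' f'$ (obtained by post-/pre-composition as in Lemmas \ref{L:comp1}--\ref{L:comp3}) and then handles concatenation via Lemma \ref{L:conhtpy}, of which Corollary \ref{C:htht1} is the two-sided form you invoke. Your extra care about the rel-endpoints bookkeeping is a point the paper passes over silently, but it is the same argument.
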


\begin{proof}  Start with $\alpha  \circ \beta = g\alpha * \beta f'$.  Now by Lemma \ref{L:comp3}, we have a homotopy $g\alpha \simeq g \alpha '$, and hence by Lemma \ref{L:conhtpy} a homotopy $g \alpha * \beta f' \simeq g \alpha' * \beta f'$.  Then  Lemma \ref{L:conhtpy} also says that $\beta f' \simeq \beta' f'$, and so $ g\alpha' * \beta f' \simeq g \alpha ' * \beta' f' = \alpha'  \beta'$.  Thus we have $\alpha  \circ \beta \simeq \alpha'  \circ \beta'$  as homotopies.  

\end{proof}

\begin{theorem}\label{thm2cat}
 We can define a 2-category  $\Gph$ as follows: 

\begin{itemize}
\item Objects [0-cells] are given by objects of $\Gph$, the finite undirected graphs. 
\item Arrows [1-cells] are given by the arrows of $\Gph$, the graph morphisms
\item Given  $f, f':  G \to H$, a 2-cell from $f$ to $f'$  is a homotopy rel endpoints class $[\alpha]$ of $\times$-homotopies $\alpha:  I_n^{\ell} \to H^G$ such that $\alpha:  f \simeq f'$.  
\item Vertical composition is defined using concatenation $[\alpha] * [\alpha'] = [\alpha * \alpha']$
\item Horizontal composition is defined using composition $[\alpha] \circ [\beta] = [\alpha  \circ \beta]$
\end{itemize}
  
 \end{theorem}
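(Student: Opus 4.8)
The plan is to verify that the listed data satisfies the axioms of a strict 2-category in the sense of a category enriched in $\mathsf{Cat}$ \cite{riehlCTIC}: each hom-collection $\Gph(G,H)$ is a category under vertical composition, horizontal composition is a functor $\Gph(G,H)\times\Gph(H,K)\to\Gph(G,K)$, and this functor is strictly associative and unital. Essentially every ingredient has already been isolated in the preceding results; the work is to assemble them, and the conceptual point is that passing to homotopy-rel-endpoints classes is exactly what makes the interchange law hold on the nose. First I would record that the $2$-cells are well defined: for fixed $f,f'\colon G\to H$, being homotopic rel endpoints is an equivalence relation on the looped walks in $(H^G)^{I_n^{\ell}}$ from $f$ to $f'$, since by Observation \ref{O:htpyequiv} homotopy is an equivalence relation in any graph, applied here inside the subgraph $\big((H^G)^{I_n^{\ell}}\big)_{f,f'}$. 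Both operations descend to classes: concatenation by Corollary \ref{C:htht1} and composition by Lemma \ref{L:htht2}.

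Next I would check the hom-categories. The objects are the morphisms $f\colon G\to H$, the identity $2$-cell on $f$ is the class $[c_f]$ of the constant homotopy, and vertical composition is $[\alpha]*[\beta]=[\alpha*\beta]$. Associativity and unitality hold already at the level of homotopies by Proposition \ref{P:ua1} (with the constant homotopy a strict unit via Observation \ref{O:unit}), hence a fortiori on classes, so each $\Gph(G,H)$ is a category.

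Then I would verify that horizontal composition is a functor. On $0$- and $1$-cells it is ordinary composition in $\Gph$, which is a category; on $2$-cells it is $[\alpha][\beta]=[\alpha\circ\beta]$. It preserves identities, since whiskering the constant homotopy $c_f$ by $g$ gives $g\,c_f=c_{gf}$ and whiskering $c_g$ by $f$ gives $c_g\,f=c_{gf}$ (Lemmas \ref{L:comp1}, \ref{L:comp2}), whence $c_f\circ c_g=c_{gf}*c_{gf}=c_{gf}$ by Observation \ref{O:unit}, i.e. $[c_f][c_g]=[c_{gf}]$. The substantive axiom, which I expect to be the main obstacle, is the interchange law: for $\alpha\colon f\simeq f'$, $\alpha'\colon f'\simeq f''$ and $\beta\colon g\simeq g'$, $\beta'\colon g'\simeq g''$ I must show $[(\alpha*\alpha')\circ(\beta*\beta')]=[(\alpha\circ\beta)*(\alpha'\circ\beta')]$. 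Expanding both sides with the distributivity of Lemma \ref{L:distr} and the associativity of Lemma \ref{L:assoc} gives
\[
(\alpha*\alpha')\circ(\beta*\beta') = g\alpha * g\alpha' * \beta f'' * \beta' f'',
\qquad
(\alpha\circ\beta)*(\alpha'\circ\beta') = g\alpha * \beta f' * g'\alpha' * \beta' f'' ,
\]
so the two words share the prefix $g\alpha$ and suffix $\beta' f''$ and differ only in the middle block $g\alpha'*\beta f''$ versus $\beta f'*g'\alpha'$. These are precisely the two homotopies compared in Proposition \ref{P:interchange} (applied to $\alpha'$ and $\beta$), hence homotopic rel endpoints; concatenating the common prefix and suffix and invoking Lemma \ref{L:conhtpy} on each side shows that the two full words are homotopic rel endpoints, i.e. equal as classes.

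Finally I would note that horizontal composition is strictly associative and unital by Proposition \ref{P:ua2}, where the $2$-category units are the identity $2$-cells on the identity morphisms $\id_G$ and the unit laws use that whiskering by an identity morphism is trivial ($(\id_G)^*=\id$ and $(\id_H)_*=\id$ by Lemmas \ref{L:comp1}, \ref{L:comp2}, so e.g. $c_f*\alpha=\alpha=\alpha*c_{f'}$). Assembling these verifications yields the 2-category. The key point, and the reason the $2$-cells must be \emph{classes} of homotopies rather than homotopies, is that Proposition \ref{P:interchange} only yields interchange up to homotopy; quotienting by homotopy rel endpoints promotes this to the strict equality of $2$-cells that a $2$-category requires.
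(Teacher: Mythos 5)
Your proposal is correct and follows essentially the same route as the paper: well-definedness via Corollary \ref{C:htht1} and Lemma \ref{L:htht2}, associativity and unitality via Propositions \ref{P:ua1} and \ref{P:ua2}, and the interchange law by expanding both sides with Lemmas \ref{L:distr} and \ref{L:assoc} and reducing to Proposition \ref{P:interchange} on the middle block. Your version is slightly more careful than the paper's in explicitly invoking Lemma \ref{L:conhtpy} to paste the common prefix and suffix back on, and in checking that horizontal composition preserves identity $2$-cells, but these are refinements of the same argument rather than a different one.
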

 
 \begin{proof}   We have shown that vertical and horizontal composition are well-defined in Corollary \ref{C:htht1} and Lemma \ref{L:htht2}, and that these operations are associative and unital in Propositions \ref{P:ua1} and \ref{P:ua2}.    Therefore what remains is to check the interchange law.

 Our set-up is as follows:  we have maps $f, f', f'':  G \to H$ and $g, g', g'':  H \to K$, with two cells $\alpha:  f \simeq f', \alpha':  f' \simeq f''$ and $\beta:  g\simeq  g', \beta': g' \simeq    g''$:  

 We want to show that $(\alpha \circ \beta) * (\alpha' \circ \beta') \simeq (\alpha * \alpha') \circ (\beta * \beta')$.    Unravelling the definitions here shows that $(\alpha \circ \beta) * (\alpha' \circ \beta') = (g \alpha* \beta f') * (g' \alpha' * \beta' f'')$, while $(\alpha * \alpha') \circ (\beta * \beta') = g (\alpha * \alpha') * (\beta * \beta') f'' = (g \alpha * g \alpha') * (\beta f'' * \beta' f'')$ using the distributivity of Lemma \ref{L:distr}.  Since concatenation is associative, we are comparing $g \alpha * \beta f' * g' \alpha' * \beta' f''$ with $g \alpha * g \alpha' * \beta f'' * \beta' f''$.    Therefore it suffices to show that $\beta f' * g'\alpha' \simeq g \alpha' * \beta f''$.   But this is exactly  Proposition \ref{P:interchange}.

 \end{proof}




\section{Structure of Homotopies for Finite Graphs}\label{S:Struct}

In this section, 
we develop a more explicit description of  $\times$-homotopies between graph morphisms when $G$ is a finite graph.  
We show that such graph homotopies can always be defined 'locally', shifting one vertex at a time.      We imagine a spider walking through the graph by moving one leg at a time.  

\begin{definition} \label{D:spiderpair} Let $f, g:  G \to H$ be graph morphisms.  We say that $f$ and $g$ are a {\bf spider pair} if there is a single vertex of $G$, say $x$, such that $f(y) = g(y)$ for all $y \neq x$.  If $x$ is unlooped there are no additional conditions, but if $x\con x \in E(G)$, then we require  that $f(x) \con g(x) \in E(H)$.    When we replace $f$ with $g$ we refer to it as a {\bf spider move}.  

\end{definition}

\begin{lemma}\label{L:spiderhom} If $f$ and $g$ are a spider pair, then $f \con g  \in E(H^G)$.  
\end{lemma}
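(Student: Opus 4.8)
The plan is to verify the adjacency condition from Definition \ref{D:exp} directly. Recall that $f \con g \in E(H^G)$ holds precisely when, for every edge $v_1 \con v_2 \in E(G)$, we have $f(v_1) \con g(v_2) \in E(H)$. Since edges of $G$ are unordered, I would read each edge in both orientations, so it suffices to check $f(v_1) \con g(v_2) \in E(H)$ for every ordered pair of adjacent vertices $(v_1, v_2)$. I will fix an arbitrary such pair and run a short case analysis according to which of $v_1, v_2$ coincide with the distinguished vertex $x$ from the spider pair of Definition \ref{D:spiderpair}.

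First I would dispose of the cases where $x$ is not both endpoints. If neither $v_1$ nor $v_2$ equals $x$, then $f$ and $g$ agree on both, so $f(v_1) \con g(v_2) = f(v_1) \con f(v_2)$, which lies in $E(H)$ because $f$ is a graph morphism. If exactly one endpoint is $x$, there are two subcases. When $v_1 = x$ and $v_2 \neq x$ we have $g(v_2) = f(v_2)$, so the claim reduces to $f(x) \con f(v_2) \in E(H)$, again immediate from $f$ being a morphism. When $v_2 = x$ and $v_1 \neq x$ we instead have $f(v_1) = g(v_1)$, and the claim becomes $g(v_1) \con g(x) \in E(H)$, which holds because $g$ is a morphism.

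The only remaining possibility is $v_1 = v_2 = x$, which can occur exactly when $x \con x \in E(G)$, i.e.\ when $x$ is looped. Here the desired conclusion is $f(x) \con g(x) \in E(H)$, and this is precisely the extra hypothesis imposed on a spider pair in Definition \ref{D:spiderpair} when $x$ is looped. This looped case is the only place where the bare morphism conditions on $f$ and $g$ do not by themselves suffice, so it is the crux of the argument, and it is exactly what the definition of spider pair was engineered to supply. Having covered every ordered adjacent pair, the adjacency condition defining $E(H^G)$ is satisfied, and therefore $f \con g \in E(H^G)$.
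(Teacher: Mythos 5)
Your proof is correct and follows essentially the same route as the paper's: a direct case analysis on whether the endpoints of an edge of $G$ coincide with the distinguished vertex $x$, using that $f$ and $g$ are morphisms in the non-loop cases and the extra spider-pair condition for the loop at $x$. You are slightly more careful than the paper in explicitly checking both orientations of an edge incident to $x$, but the substance is identical.
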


\begin{proof}  For any $y \con z \in E(G)$ we need to verify that that $f(y) \con g(z) \in E(H)$.  If $y, z \neq x$ then $g(z) = f(z)$ and so this follows from the fact that $f$ is a  graph morphism.  If $y \con x$ for $y \neq x$, then $f(y) \con g(x)$ since $f(y) = g(y)$  and $g$ is a graph morphism; similarly, $f(x) \con g(y)$.   Lastly, if $x \con x$, then we have asked that $f(x) \con g(x)$.  Therefore $f \con g$ have an edge in the exponential graph $H^G$.  

\end{proof}

\begin{example}\label{E:spiderpair}
Let $G$ and $H$ be the  graphs from Example \ref{E:examexp}:

$$\begin{tikzpicture} 
\node at (-.5,0){$G=$};

\draw[fill] (0,0) circle (2pt);
\draw (0,0) --node[below]{0} (0,0);
\draw (0,0)  to[in=50,out=140,loop, distance=.7cm] (0,0);

\draw[fill] (1,0) circle (2pt);
\draw (1,0) --node[below]{1} (1,0);

\draw (0,0) -- (1,0);
\end{tikzpicture}\phantom{ww} 
\begin{tikzpicture} 
\node at (-.5,0){$H=$};

\draw[fill] (0,0) circle (2pt);
\draw (0,0) --node[below]{$a$} (0,0);
\draw (0,0)  to[in=50,out=140,loop, distance=.7cm] (0,0);

\draw[fill] (1,0) circle (2pt);
\draw (1,0) --node[below]{$b$} (1,0);

\draw[fill] (2,0) circle (2pt);
\draw (2,0) --node[below]{$c$} (2,0);
\draw (2,0)  to[in=50,out=140,loop, distance=.7cm] (2,0);

\draw (0,0) -- (1,0) -- (2,0);
\end{tikzpicture}.$$

Let $f, g:G\to H$ be defined by $f(0)=a, f(1)=b$, and  $g(0)=a, g(1)=a$.    So $f, g$ are a spider pair, and we  see that the morphisms $f,g$ are adjacent in the exponential object $H^G$.

$$\begin{tikzpicture}

\node at (0,-.5){$a$};
\node at (1,-.5){$b$};
\node at (2,-.5){$c$};
\node at (1,-1){$0$};

\node at (-.5, 0){$a$};
\node at (-.5, 1){$b$};
\node at (-.5, 2){$c$};
\node at (-1, 1){$1$};

\draw[fill, blue] (0,0) circle (2pt);
\draw[fill] (1,0) circle (2pt);
\draw[fill] (2,0) circle (2pt);
\draw[fill, blue] (0,1) circle (2pt);
\draw[fill] (1,1) circle (2pt);
\draw[fill] (2,1) circle (2pt);
\draw[fill] (0,2) circle (2pt);
\draw[fill] (1,2) circle (2pt);
\draw[fill] (2,2) circle (2pt);

\draw[ultra thick, blue] (0,0)  to[in=50,out=140,loop, distance=.7cm] (0,0);
\draw[ultra thick, blue] (0,1)  to[in=50,out=140,loop, distance=.7cm] (0,1);
\draw (2,1)  to[in=50,out=140,loop, distance=.7cm] (2,1);
\draw (2,2)  to[in=50,out=140,loop, distance=.7cm] (2,2);

\draw[blue] (0,1) -- node[below left]{$f$} (0,1);
\draw[blue] (0,0) -- node[below left]{$g$} (0,0);

\draw[ultra thick, blue] (0,0) -- (0,1);
\draw (0,0) -- (1,0);
\draw (0,0) -- (1,1);
\draw (0,2) -- (1,0);
\draw (0,2) -- (1,1);
\draw (0,0) -- (0,1);
\draw (1,1) -- (2,0);
\draw (1,1) -- (2,2);
\draw (1,2) -- (2,0);
\draw (1,2) -- (2,2);
\draw (2,1) -- (2,2);


\end{tikzpicture}$$

\end{example}

We now prove  that all homotopies with finite domain can be decomposed as a  sequences of spider moves, moving one vertex at a time.  

\begin{proposition}[Spider Lemma] \label{P:spider}  If $f, g:  G \to H$ and $G$ is a finite graph, and  $f \con g \in E(H^G)$, then there is a finite sequence of morphisms $f = f_0, f_1, f_2, \dots, f_n = g$ such that each successive pair $f_k, f_{k+1}$ is a spider pair.  

\end{proposition}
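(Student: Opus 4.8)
The plan is to interpolate between $f$ and $g$ one vertex at a time, verifying at each stage that the intermediate map is a genuine graph morphism and that consecutive maps form a spider pair. Since $f \con g \in E(H^G)$, we know by definition that whenever $v_1 \con v_2 \in E(G)$, we have $f(v_1) \con g(v_2) \in E(H)$; symmetrically (because the edge relation in $H^G$ should be read with $f,g$ interchangeable here, or by applying the condition to the reversed edge $v_2 \con v_1$) we also get $g(v_1) \con f(v_2)$. This two-sided adjacency is the raw material I would use throughout.

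First I would fix an enumeration $x_1, x_2, \dots, x_m$ of the vertices of $G$ and define the intermediate maps $f_k$ by ``switching the first $k$ vertices from the $f$-value to the $g$-value'': set $f_k(x_j) = g(x_j)$ for $j \leq k$ and $f_k(x_j) = f(x_j)$ for $j > k$, so that $f_0 = f$ and $f_m = g$. By construction $f_k$ and $f_{k+1}$ differ only at the single vertex $x_{k+1}$, which is exactly the spider-pair setup of Definition \ref{D:spiderpair}; the only genuine content is to check (a) that each $f_k$ is a graph morphism, and (b) that when $x_{k+1}$ is looped the extra condition $f_k(x_{k+1}) \con f_{k+1}(x_{k+1})$, i.e. $f(x_{k+1}) \con g(x_{k+1})$, holds. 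Condition (b) follows immediately from the looped case of the edge hypothesis: $x_{k+1} \con x_{k+1} \in E(G)$ gives $f(x_{k+1}) \con g(x_{k+1}) \in E(H)$.

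The main obstacle is verifying (a), that each $f_k$ is itself a graph morphism, since a priori mixing $f$-values and $g$-values on the two endpoints of an edge need not land on an edge. I would handle this by casework on an arbitrary edge $x_i \con x_j$ of $G$. If both endpoints are ``early'' ($i, j \leq k$) then $f_k$ agrees with $g$ on both and we use that $g$ is a morphism; if both are ``late'' ($i, j > k$) we use that $f$ is a morphism; the mixed case, say $i \leq k < j$, is where we need the hypothesis $f \con g$, which supplies exactly $g(x_i) \con f(x_j)$ via the two-sided adjacency noted above. This covers all cases and shows $f_k \in \Gph(G,H)$.

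A subtlety worth flagging is that the naive one-at-a-time switch can fail condition (b) at an \emph{intermediate} stage: moving $x_{k+1}$ requires $f_k(x_{k+1}) \con f_{k+1}(x_{k+1})$ only when $x_{k+1}$ is looped, and I should confirm this reduces to $f(x_{k+1}) \con g(x_{k+1})$ (it does, since $f_k$ and $f_{k+1}$ take the $f$- and $g$-values respectively at $x_{k+1}$). Once (a) and (b) are established for every $k$, Lemma \ref{L:spiderhom} guarantees each consecutive pair is connected in $E(H^G)$, and the sequence $f = f_0, f_1, \dots, f_m = g$ is the desired decomposition into spider moves, completing the proof.
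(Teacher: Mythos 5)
Your proof is correct and follows essentially the same route as the paper: both interpolate between $f$ and $g$ by switching one vertex at a time along a fixed enumeration, verify each intermediate map is a morphism by casework on edges (using $f \con g \in E(H^G)$ for the mixed case, and the symmetry of the unordered edge condition), and handle the looped-vertex condition via the loop edge in $G$. The only difference is cosmetic (you switch the first $k$ vertices to their $g$-values where the paper switches the last $k$), so nothing further is needed.
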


\begin{proof}
Since $G$ is a finite graph, we can label its vertices $v_1, v_2, \dots, v_n$.  Then for $0 \leq k \leq n$, we define:  
\begin{equation*}
        f_k(v_i) = \begin{cases}
                        f(v_i) &  \text{ for }  i \leq n-k \\
                        g(v_i) &  \text{ for }  i > n-k \\ 
                    \end{cases}
\end{equation*}

First we check that each  $f_k$ is a graph morphism.   Suppose $v_i \con v_j \in E(G)$;  we need to show that $f_k(v_i) \con f_k(v_j)$.   If $i, j \leq n-k$ then $f_k = f$ for both vertices, and so since $f$ is a morphism,  $f(v_i) \con f(v_j)$.  Similarly if $i, j > n-k$ then $f_k = g$ on both vertices.    Lastly, if $i\leq n-k$ and $j>n-k$, we know that $f \con g$ in $H^G$,  so by the structure of edges in the exponential object, $f(v_i ) \con g(v_j)$.  Thus $f_k(v_i) \con f_k(v_j)$.  

It is clear that each pair  $f_k, f_{k+1}$ agrees on every vertex except $v_{n-k}$.   So to show this is a spider pair,  we only need to check that if $v_{n-k}$ is looped, then $f_k(v_{n-k}) \con f_{k+1}(v_{n-k})$.    But since $f \con g \in E(H^G)$, we know that if $v_{n-k} \con v_{n-k}$ then  $f(v_{n-k}) \con g(v_{n-k})$.

\end{proof}

\begin{corollary}  Whenever $f, g:  G \to H$ with $G$ finite and $f \simeq g$, there is a finite sequence of spider moves connecting $f$ and $g$.  
\end{corollary}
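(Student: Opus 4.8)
The plan is to reduce the corollary to the Spider Lemma (Proposition \ref{P:spider}) by using the description of homotopy as a looped walk in the exponential graph, established in Observation \ref{O:seq}. The key observation is that a homotopy is built out of edges in $H^G$, and each such edge can be refined into a chain of spider moves.

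First I would unpack what $f \simeq g$ means. By Observation \ref{O:seq}, a homotopy from $f$ to $g$ is a looped walk in $H^G$, that is, a sequence of graph morphisms $(f = h_0, h_1, h_2, \dots, h_m = g)$ where each consecutive pair satisfies $h_k \con h_{k+1} \in E(H^G)$. This breaks the global homotopy into finitely many individual edges in the exponential object.

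Next, I would apply the Spider Lemma to each such edge. For each pair $h_k \con h_{k+1} \in E(H^G)$, Proposition \ref{P:spider} guarantees a finite sequence of morphisms starting at $h_k$ and ending at $h_{k+1}$ in which every successive pair is a spider pair. Concatenating these finite sequences over all $k = 0, 1, \dots, m-1$ yields a single finite sequence of morphisms from $f$ to $g$ in which every consecutive pair is a spider pair, i.e., a finite sequence of spider moves connecting $f$ and $g$. This is precisely the desired conclusion.

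This argument is essentially immediate given the earlier results, so there is no real obstacle; the only thing to be careful about is the bookkeeping when stitching the per-edge spider sequences together, namely that the endpoint $h_{k+1}$ of one refined sequence matches the starting point of the next, which holds by construction since both equal the morphism $h_{k+1}$. Thus the proof is simply: decompose the homotopy into edges of $H^G$, invoke Proposition \ref{P:spider} on each edge, and concatenate.
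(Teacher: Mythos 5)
Your proof is correct and follows exactly the intended route: the paper states this as an immediate corollary of Proposition \ref{P:spider}, with the implicit argument being precisely your decomposition of the homotopy into a looped walk $(f\, h_1 \dots h_{m-1}\, g)$ in $H^G$ via Observation \ref{O:seq}, followed by applying the Spider Lemma to each edge and concatenating. No gaps; the endpoint-matching bookkeeping you mention is handled correctly.
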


Thus we can see  explicitly what $\times$-homotopies of graph morphisms betweem finite graphs can do.

\begin{example}
Let $G=C_4, H=P_2$ as in Example \ref{E:examver}.  The morphisms $f = babc$ and $g = bcba$ are adjacent in $H^G$.  They are not a spider pair since $f(1)\neq g(1)$ and $f(3)\neq g(3)$.  However, if we define $h = baba$; then there is a spider move $f$ to $h$, and another from $h$ to $g$, giving a sequence of spider moves from $f$ to $g$, shown in Figure 4.1.

\begin{figure}[h]\label{Figure:SpiderMoves}
$$\begin{tikzpicture}

\draw[fill] (0,0) circle (2pt);
\draw (0,0) --node[left] {$babc$} (0,0);
\draw (0,0)  to[in=50,out=140,loop, distance=.7cm] (0,0);
\draw[fill] (1,1) circle (2pt);
\draw (1,1.2) --node[above] {$baba$} (1,1.2);
\draw (1,1)  to[in=50,out=140,loop, distance=.7cm] (1,1);
\draw[fill] (2,0) circle (2pt);
\draw (2,0)  to[in=50,out=140,loop, distance=.7cm] (2,0);
\draw (2,0) --node[right] {$bcbc$} (2,0);
\draw[fill] (1,-1) circle (2pt);
\draw (1,-1)  to[in=50,out=140,loop, distance=.7cm] (1,-1);
\draw (1,-1) --node[below] {$bcba$} (1,-1);

\draw (0,0)--(1,1)--(2,0)--(1,-1)--(0,0);
\draw (0,0) -- (2,0);
\draw (1,1) -- (1,-1);

\draw[ultra thick, blue] (0,0)--(1,-1);
\draw[ultra thick, red, dashed] (0,0)--(1,1)--(1,-1);

\draw[blue] (0,0) --node[below] {\tiny{$f$}} (0,0);
\draw[blue] (1,-1) --node[right] {\tiny{$g$}} (1,-1);
\draw[red] (1,1) --node[right] {\tiny{$h$}} (1,1);

\draw[fill] (4,0) circle (2pt);
\draw (4,0) --node[left] {$cbab$} (4,0);
\draw (4,0)  to[in=50,out=140,loop, distance=.7cm] (4,0);
\draw[fill] (5,1) circle (2pt);
\draw (5,1.2) --node[above] {$abab$} (5,1.2);
\draw (5,1)  to[in=50,out=140,loop, distance=.7cm] (5,1);
\draw[fill] (6,0) circle (2pt);
\draw (6,0)  to[in=50,out=140,loop, distance=.7cm] (6,0);
\draw (6,0) --node[right] {$cbcb$} (6,0);
\draw[fill] (5,-1) circle (2pt);
\draw (5,-1)  to[in=50,out=140,loop, distance=.7cm] (5,-1);
\draw (5,-1) --node[below] {$abcb$} (5,-1);

\draw (4,0)--(5,1)--(6,0)--(5,-1)--(4,0);
\draw (4,0) -- (6,0);
\draw (5,1) -- (5,-1);

\end{tikzpicture}$$

\caption{Decomposing a homotopy into spider moves.}

\end{figure}
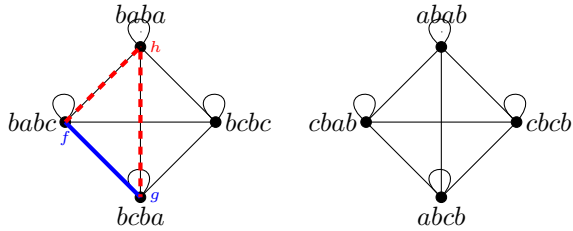
\end{example}

 A special case of the spider moves can be used to analyze homotopy equivalences.   In the literature, homotopy has been linked to the idea of a {\bf fold} or a {\bf dismantling} \cite{HN2004, GMDG, Docht1, MoreFolding}.  This can be seen as a special case of our more general spider moves.

\begin{definition}\label{D:fold}  If $G$ is a graph, we say that a morphism $f:   G\to G$ is a {\bf fold} if $f$ and the identity map are a spider pair.  
\end{definition}

\begin{proposition} \label{P:fold}  If $f$ is a fold, then $f:  G \to Im(f)$  is a homotopy equivalence, where $Im(f)$ is as defined in Definition \ref{D:im}. 
\end{proposition}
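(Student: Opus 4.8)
The plan is to exhibit an explicit homotopy inverse for $f$, namely the inclusion of the image back into $G$. Write $H = \Img(f)$, let $\iota: H \hookrightarrow G$ be the inclusion of this subgraph (automatically a graph morphism), and let $\bar f: G \to H$ denote $f$ with codomain restricted to its image, so that $\iota \bar f = f$ as self-maps of $G$. Since $f$ is a fold, Definitions \ref{D:fold} and \ref{D:spiderpair} provide a single vertex $x$ with $f(y) = y$ for every $y \neq x$, and (when $x$ is looped) $f(x) \con x$. The goal is then to check the two homotopy-equivalence identities $\bar f \iota \simeq \id_H$ and $\iota \bar f \simeq \id_G$, one of which will in fact hold as an equality.

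First I would pin down the image. Because $f$ fixes every vertex other than $x$, the vertex set of $H$ is $\{f(w) : w \in V(G)\} = (V(G)\setminus\{x\}) \cup \{f(x)\}$. If $f(x) = x$ then $f = \id_G$ and the statement is trivial, so assume $f(x) \neq x$; then $f(x) \in V(G)\setminus\{x\}$ and hence $V(H) = V(G)\setminus\{x\}$. In either case every vertex of $H$ is a fixed point of $f$, so $\bar f \iota = \id_H$ on the nose: for $v \in V(H)$ we have $\bar f(\iota(v)) = f(v) = v$. This disposes of the backward composite with literal equality. For the forward composite, note $\iota \bar f$ is simply $f$ viewed as a self-map of $G$, so it suffices to show $f \simeq \id_G$. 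But $f$ and $\id_G$ form a spider pair by the definition of a fold, so Lemma \ref{L:spiderhom} gives $f \con \id_G \in E(G^G)$; since both are morphisms they are looped vertices of $G^G$ by Observation \ref{O:morph}, and this single edge is exactly a length-one homotopy $f \simeq \id_G$. Combining the two, $\bar f$ and $\iota$ are mutually inverse up to homotopy, so $f: G \to \Img(f)$ is a homotopy equivalence.

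The one step meriting care is the identification $\bar f\iota = \id_H$: it hinges on the observation that the single possibly-moved image $f(x)$ is itself among the fixed points of $f$, which is what upgrades the backward composite from a homotopy to an equality. Everything else is immediate from the spider-pair structure together with Lemma \ref{L:spiderhom}, so I do not expect a genuine obstacle here; the proof is short once the image is correctly described.
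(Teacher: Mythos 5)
Your proposal is correct and follows essentially the same route as the paper's proof: identify $\Img(f)$ as $G\setminus\{x\}$ (after dispensing with the trivial case $f(x)=x$), observe that the composite into the image is literally the identity, and use the spider-pair edge $f \con \id_G$ from Lemma \ref{L:spiderhom} to get $\iota \bar f = f \simeq \id_G$. Your write-up is a bit more explicit about why $\bar f\iota = \id_H$ holds on the nose, but the argument is the same.
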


\begin{proof}   Since $f$ and $id_G$ form a spider pair, the map $f$ is the identity on every vertex except one, call it $v$.  If $f(v) = v$ then $f$  is the identity and we are done.  

If $f(v)= w \neq v$, then $Im(f) = G\backslash\{ v\}$.   Consider $\iota:  Im(f) \to G$ to be the inclusion map.  Then the  composition $f\iota $ is the identity on $Im(f)$.  Now consider $\iota f:  G \to G$.  Since $\iota$ is just the inclusion of the image, $\iota f = f$.  By Lemma \ref{L:spiderhom}, $f \simeq \id$.  

\end{proof}

 We  identify when we have a potential fold by a condition on neighborhood of vertices.  In  \cite{GMDG} \cite{Docht1} folds are defined using  this condition.  We denote the neighbourhood of a vertex $v$ by $N(v) = \{ w \in V(G) \, | \, w \con v \}$.
 
\begin{proposition}\label{P:fold2}   Suppose that $f:  V(G) \to V(G)$ is a set map of vertices such that  $f$ is the identity on all vertices except one.  Explicitly there exists a vertex $w \in V(G)$, and $f(x) = x$ for all $x \neq w$.     Let $v = f(w)$.  Then $f$ is a fold if and only if $N(w) \subseteq N(v)$.  
\end{proposition}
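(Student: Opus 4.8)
The plan is to unwind what ``fold'' means for this particular $f$ and then check the two directions of the equivalence separately. By Definition \ref{D:fold}, $f$ is a fold precisely when $f$ is a graph morphism \emph{and} $f$ together with $\id_G$ forms a spider pair. Since $f$ agrees with $\id_G$ on every vertex other than $w$, the only vertex at which they can differ is $w$, so the only spider-pair condition that can fail is the looped one: if $w \con w \in E(G)$ then we must have $\id_G(w) \con f(w)$, i.e. $w \con v$. (If $v = w$ then $f = \id_G$ and both sides of the equivalence hold trivially, so I assume $v \neq w$.) Throughout I use that $N(w)$ is the set of vertices adjacent to $w$, so that $w \in N(w)$ exactly when $w$ is looped.

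For the direction where $f$ is assumed to be a fold, I would take an arbitrary $u \in N(w)$, so $u \con w \in E(G)$. If $u \neq w$ then $f(u) = u$ and $f(w) = v$, and since $f$ is a morphism $u = f(u) \con f(w) = v$, giving $u \in N(v)$. If instead $u = w$, then $w$ is looped, and the looped spider-pair condition gives exactly $w \con v$, so again $u = w \in N(v)$. Hence $N(w) \subseteq N(v)$.

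For the converse, I would assume $N(w) \subseteq N(v)$ and verify both that $f$ is a morphism and that the spider condition holds. To see $f$ is a morphism, I would take an edge $a \con b \in E(G)$ and split into cases: if neither $a$ nor $b$ equals $w$ then $f$ fixes both and the edge is preserved; if exactly one of them, say $b = w$, then $a \in N(w) \subseteq N(v)$ gives $a \con v = f(b)$, so the edge is preserved; the remaining case is the loop $a = b = w$, where I need $f(w) \con f(w)$, i.e. $v \con v$. The spider condition itself is handled along the way: when $w$ is looped, $w \in N(w) \subseteq N(v)$ yields $w \con v$, which is precisely the looped spider-pair requirement.

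The one genuinely nontrivial step, and the place I expect to have to be careful, is verifying $v \con v$ in the loop case of the converse. The key observation is that the inclusion $N(w) \subseteq N(v)$ can be applied twice: since $w$ is looped, $w \in N(w) \subseteq N(v)$, so $w \con v$; but then $v$ is adjacent to $w$, i.e. $v \in N(w) \subseteq N(v)$, which says exactly $v \con v$. Thus $v$ is automatically looped whenever $w$ is, and the loop $w \con w$ is carried to the genuine loop $v \con v$. With this in hand every case closes, and the two directions together establish the equivalence.
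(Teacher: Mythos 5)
Your proof is correct and follows essentially the same route as the paper's: the same case analysis on edges at $w$, and the same key observation that applying $N(w)\subseteq N(v)$ twice forces $v$ to be looped whenever $w$ is. The only (harmless) addition is your explicit dispatch of the degenerate case $v=w$.
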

\begin{proof}

First, suppose that $f$ is a fold, and hence  a graph  morphism.  Let $y \in N(w)$;  then $y \con w$, and so $f(y) \con f(w)$.  If $y \neq w$, then $f(y) = y$ and $f(w) = v$, so $y \con v$ and hence $y \in N(v)$.   If $y =w$ then $w \con w$ so by the looped condition for spider pair,  we assume that $f(w) \con id(w)$.  So $v \con w$ and  $w \in N(v)$.    Hence the neighbourhood condition is satisfied.  

Conversely, suppose that $f$ is a set map of vertices satisfying the neighbourhood condition.  
To show that $f$ is a morphism, we check that it preserves all connections.  If  $x,x'\in V(G)\backslash \{w\}$ and  $x\con x'$, then $f(x)=x, f(x')=x'$, and so  $f(x)\con f(x')$.  If  $y\in V(G)\backslash \{w\}$ and  $w\con y$, then $y \in N(w)  \subseteq N(v)$, so $v \con y$ and hence  $f(w)\con f(y)$.  Lastly, if  $w$ is looped, then  $w \in N(w) \subseteq N(v)$, so $v\con w$.  But then $v \in N(w) \subseteq N(v)$, and consequently $v$ must be looped as well.  Thus $f(w)\con f(w)$.

To see that $f$ is a fold, we know that if $x\in V(G)$, we have that $f(x)=x$ if and only if $x\neq w$.  So we just need to check that the extra condition on looped vertices holds.   If $w \con w$ then $w \in N(w) \subseteq N(v)$ and so   $v \con w$.  
\end{proof}

\begin{example} 
Let $X=P_2$ and let $f:G\to G$ be defined by $f(a)=a, f(b)=b, f(c)=a$.
$$
\begin{tikzpicture}

\draw[fill] (3,0) circle (2pt);
\draw[blue] (3,0) circle (2.2pt);
\draw (3,0) --node[below] {$a$} (3,0);
\draw (3,0) --node[above, blue] {\tiny{$f(a), f(c)$}} (3,0);
\draw[fill] (4,0) circle (2pt);
\draw[blue] (4,0) circle (2.2pt);
\draw (4,0) --node[below] {$b$} (4,0);
\draw (4,0) --node[above, blue] {\tiny{$f(b)$}} (4,0);
\draw[fill] (5,0) circle (2pt);
\draw (5,0) --node[below] {$c$} (5,0);

\draw (3,0)--(4,0)--(5,0);
\draw[blue, ultra thick, dashed] (3,0)--(4,0);


\end{tikzpicture}
$$

The vertex that $f$ does not fix is $c$, and $N(c) = \{ b\} = N(a)$.  Hence the neighborhood condition of Proposition \ref{P:fold2} holds here, and this is a fold map.

\end{example}

The fact that a fold, as defined using the neighbourhood condition, gives a homotopy equivalence is proved in \cite{Kosolov3}   by looking at the polyhedral Hom complex.    Propositions \ref{P:fold} and \ref{P:fold2} offer an alternate approach which is internal to graphs.


\section{Defining a  Homotopy Category for  Finite Graphs}\label{S:HoCat}

For this section, we restrict to finite graphs and consider the full sub-category $\fGph$ of $\Gph$ consisting of graphs with a finite set of vertices.  We will be applying Proposition \ref{P:spider} to decompose homotopies as a sequence of spider moves, and so will require a finite set of vertices in our domains.  

Since our 2-cells are defined by  homotopies, known to be an equivalence relation on morphisms \cite{Docht1}, we can make the following definition.  

\begin{definition}\label{D:hocat}
We define the {\bf homotopy category} $\HoGph$ by modding  out the 2-cells in   the 2-category $\fGph$.  The objects of  $\HoGph$ are the same as the objects of $\fGph$, finite graphs, and the arrows of   $\HoGph$ are given by equivalence classes  $[f]$ of graph morphisms, where $f$ and $g$ are equivalent if they have a 2-cell between them, that is, if they are $\times$-homotopic.   This also defines a natural projection functor $\Psi:  \fGph \to \HoGph$ which takes any graph $G$ to $G$, and any morphism $f$ to its homotopy class  $[f]$.     \end{definition}  

Since all the 2-cells of $\fGph$ have become isomorphisms in $\HoGph$, the result is an ordinary 1-category.  
We will show that this is a homotopy category for $\fGph$ in the sense that it satisfies the universal property for localizing homomotopy equivalences as described in the following result.

\begin{theorem}\label{T:hocat}
Given any  functor $F:\fGph\to \mathcal{C}$ such that $F$ takes homotopy equivalences to isomorphisms,  then there is a unique functor $F':\HoGph\to\mathcal{C}$ such that $F'\Psi=F$.

$$
\begin{tikzpicture}
\node (G) at (0,2){$\fGph$};
\node (H) at (0,0){$\HoGph$};
\node (C) at (3,0){$\mathcal{C}$};

\draw[->] (G) -- node[left]{$\Psi$} (H);
\draw[->] (G) -- node[above right]{$F$} (C);
\draw[->, dashed] (H) -- node[below]{$\exists!F'$} (C);

\draw (1,2/3) --node{$=$} (1,2/3);

\end{tikzpicture}
$$

\end{theorem}
\begin{proof}
It is clear that $F':\HoGph\to \mathcal{C}$ needs to have  $F'(G)=F(G)$ for any $G\in \Obj(\HoGph)$ and $F'([f])=F(f)$ for any $[f]\in \Hom(\HoGph).$  Since $\Obj(\Gph)=\Obj(\HoGph)$, we have that $F'$ is well defined on $\Obj(\HoGph)$.  It remains to show that $F'$ is well defined on $\Hom(\HoGph)$:  that is, given $f,f'\in[f]$, we always have  $F(f)=F(f')$.  By Proposition \ref{P:spider}, it suffices to show that $F(f)=F(f')$ whenever $f,f'$ are a spider pair.

  Let $f,f':G\to H$ be a spider pair.  Then there is a vertex  $v\in V(G)$ such that $f(w)=f'(w)$ for all $w \neq v$. 
Define a new graph  $\hat{G}$ as follows:  

\begin{eqnarray*}
V(\hat{G})&=&V(G)\cup\{v^*\}.\\
E(\hat{G})&=&\begin{cases}w_1\con w_2 & \textup{ when }  w_1\con w_2\in E(G), \\ v^*\con w &\textup{ when } v \con w\in E(G)
\\ v^*\con v^* &\textup{ when } v \con v \in E(G) \end{cases}.
\end{eqnarray*}
 Thus the new vertex $v^*$ is attached to the same vertices as $v$, and is looped if and only if $v$ is looped.  
 
Let $\iota_1:G\to \hat{G}$ be the inclusion defined by $\iota_1(w)=w$ for  $w\in V(G)$.  Let $\iota_2:G\to \hat{G}$ be the inclusion defined by $\iota_2(w)=w$ for each $w\in V(G)\backslash \{v\}$ and $\iota_2(v)=v^*$. Since $N(v)=N(v^*)$ in $\hat{G}$, this is a graph morphism.

Define $\hat{f}:\hat{G}\to H$ by $$\hat{f}(w)=\begin{cases}  f(w) & \textup{ if } w\in V(G) \\ f'(v) & \textup{ if } w=v^*   \end{cases}.$$

We claim that $\hat{f}$ is a graph morphism:  suppose $w_1\con w_2\in E(\hat{G})$.  If $w_1, w_2\in V(G)$, then $\hat{f}$ agrees with $f$,  so since $f$ is a graph morphism, $\hat{f}(w_1) \con \hat{f}(w_2).  $   Now suppose that  $w_1=v^*$ and $w_2 \in V(G)$.   Then $\hat{f}(w_1) = f'(v)$ and $\hat{f}(w_2)=f(w)= f'(w)$.  Then  $f'(v) \con f'(w)$ since $f'$ is a graph morphism, so $\hat{f}(w_1) \con \hat{f}(w_2)$.    Lastly, if $w_1 = w_2 = v$ then $\hat{f}(w_i) = f'(v)$, which will be looped since $v$ was looped and $f'$ is a graph morphism.   


It is clear from the definition that $f=\hat{f}\iota_1$ and $f'=\hat{f}\iota_2$.
Define $\rho:\hat{G}\to G$ by $$\rho(w)=\begin{cases} w & \textup{ if } w\in V(G) \\ v & \textup{ if } w=v^*   \end{cases}$$ This is a fold by Proposition \ref{P:fold2} since $N(v^*)=N(v)$.  Moreover, $\rho\iota_1=\rho\iota_2=\id_G$. 

So notice that $F(\id_G)=F_{\id_G}=F(\rho\iota_1)=F(\rho)F(\iota_1)$.  Similarly $F_{\id_G}=F(\rho\iota_2)=F(\rho)F(\iota_2)$.  Since $\rho$ is a homotopy equivalence, $F(\rho)$ is an isomorphism and thus $F(\iota_1)=F(\iota_2)$.


Finally, we conclude that \begin{eqnarray*}
F(f)&=&F(\hat{f}\iota_1)\\
&=&F(\hat{f})F(\iota_1)\\
&=&F(\hat{f})F(\iota_2)\\
&=&F(\hat{f}\iota_2)\\
&=&F(f').
\end{eqnarray*}

Thus, given $f,f'\in [f]$, we have that $F'([f])=F(f)=F(f')=F'([f'])$ and $F'$ is well defined.

\end{proof}

\section{A Skeleton for the Homotopy Category} \label{S:skel}

Because we have created a homotopy category without a model structure, we look for another way to describe the structure of $\HoGph$. In this section, we will show that finite stiff graphs represent all finite graphs up to homotopy.  More precisely, we  show that the finite stiff graphs form a skeleton of the homotopy category $\HoGph$ in the following sense.

\begin{definition}\label{D:skel} \cite{riehlCTIC}
A full subcategory $\sf{D}$ of a category $\sf{C}$ is a {\bf skeleton} of $\sf{C}$ provided
\begin{itemize}
    \item the inclusion $\sf{D} \into \sf{C}$ is essentially surjective, meaning that every object $C \in \sf{C}$ is isomorphic to an object $D \in \sf{D}$
    \item no two distinct objects of $\sf{D}$ are isomorphic.
\end{itemize}
\end{definition}

 In the  literature, graphs that cannot  folded are referred to as {\bf stiff} graphs \cite{GMDG} \cite{BonatoCaR}.

\begin{definition}\label{D:nerve}  We say that a graph $G$ is {\bf stiff}  if there are no two distinct vertices $v, w$ such that $N(v) \subseteq N(w)$. 

\end{definition}

\begin{example} \label{E:p1}
One large family of stiff graphs are {\bf cores} \cite{Core, HN2004}.  Since folds are graph morphisms, a core $C$ cannot admit any folds and thus must be stiff.
 Therefore complete graphs, odd cycles, and all graphs where the only endomorphisms are automorphisms are  minimal retracts.

\end{example}

\begin{example}\label{E:p2}
Another family of pleats is given by   cycles of size 6 or greater.  It is clear that $C_4$ will admit a fold, but for any greater cycle, distinct vertices can share at most 1 neighbor.  The odd cycles are covered under Example \ref{E:p1};   large even cycles are also stiff.  
\end{example}

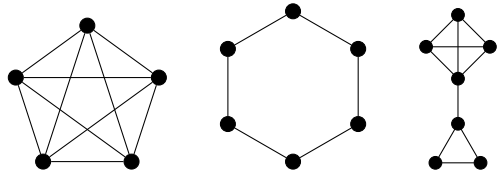
\begin{figure}[h]
$$\begin{tikzpicture}\graph[circular placement,   empty nodes, nodes={circle,draw, inner sep=2pt,minimum size=1pt, fill}] { subgraph K_n [n=5, clockwise, radius=1cm] };\end{tikzpicture}\  \  \ \ \ \ \begin{tikzpicture}\graph[circular placement,   empty nodes, nodes={circle,draw, inner sep=2pt,minimum size=1pt, fill}] { subgraph C_n [n=6, clockwise, radius=1cm] };\end{tikzpicture}\ \ \ \ \ \ 
\begin{tikzpicture}[scale=0.6]
\draw[fill] (-1/2, -0.866) circle (4pt);
\draw[fill] (1/2, -0.866) circle (4pt);
\draw[fill] (0,0) circle (4pt);
\draw[fill] (0,1) circle (4pt);
\draw[fill] (0.707,1.707) circle (4pt);
\draw[fill] (-0.707,1.707) circle (4pt);
\draw[fill] (0,1.707+0.707) circle (4pt);

\draw (0,0) -- (-1/2, -0.866) -- (1/2, -0.866) -- (0,0) -- (0,1) -- (0.707, 1.707) -- (0, 1.707+0.707) -- (-0.707, 1.707) -- (0,1);

\draw (0.707, 1.707) -- (-0.707, 1.707);

\draw (0,1) -- (0, 1.707+0.707);

\end{tikzpicture}
$$
\caption{Three examples of pleats.  On the left a core $K_5$, in the middle an even cycle $C_6$, on the right a graph that is neither a core nor even cycle.}
\end{figure}

Let $\sGph$ refer to the full subcategory of finite stiff graphs in $\HoGph$.  Thus the objects of $\sGph$ are the  finite stiff graphs, and the morphisms are homotopy classes of graph morphisms.    

\begin{theorem}\label{T:skel}  $\sGph$ is a skeleton of $\HoGph$ in the sense of Definition \ref{D:skel}.
\end{theorem}
 We will consider the two conditions of Definition \ref{D:skel} separately. 
 
\begin{proposition}\label{L:esssurj} The inclusion  $\sGph \into \HoGph$ is essentially surjective.
\end{proposition}

\begin{proof}
We proceed via induction on $n:=|V(G)|$.  Note that if $n=1$, $G$ is necessarily stiff.   Suppose $n>1$ and $G$ is not stiff, then there are distinct vertices $v, w$ such that $N(v) \subseteq N(w)$, and we can define a fold map $\rho:G \to G-\{ v\}$ which takes $v$ to $ w$ and is the identity on all other vertices.  By Propositions \ref{P:fold} and \ref{P:fold2}, this is a homotopy equivalence.  By induction, $G-\{v\}$ is homotopy equivalent to a stiff graph, and thus $G$ is as well.  

\end{proof}

To show the second condition, we note that  any sequence of folds yields  a unique graph up to isomorphism,  proved in  \cite{HN2004, GMDG, BonatoCaR} in the context of cops and robbers on graphs.  In \cite{Docht1} Proposition 6.6,  Dochterman applies this and the interpretation of homotopy in the polyhedral Hom complex to show that if $G$ and $H$ are stiff graphs then $G$ and $H$ are homotopy equivalent if and only if they are isomorphic, verifying the second condition for the skeleton.    Here we offer an alternate proof which does not use the  Hom space.

We start with the following.

\begin{lemma} \label{T:minnerve} If $G$ is stiff, then $G$ is not homotopy equivalent to any proper subgraph of itself.

\end{lemma}

\begin{proof}  

We first show that for a stiff graph $G$, the identity is not homotopic to any other endomorphism.  Suppose that $f \con id_G$.  Let  $v\in V(G)$, and  let $x \in N(v) $.  Then $f(v) \con id_G(x)$, i.e. $f(v) \con x$  so $x  \in N(f(v))$.      So $N(v) \subseteq N(f(v))$.  By the neighbourhood condition, we conclude that $v = f(v)$ and so $f = id$.  

Then, suppose that $G$ is homotopy equivalent to a subgraph of itself  $H$.  So we have $f:  G \to H$ and $g:  H \to G$ such that $gf $ is homotopic to  $id_G$.   Then $gf$ must actually be the identity on $G$.  Hence $G$ is isomorphic to $H$.  
\end{proof}

\begin{lemma}\label{L:image}
If $f: G\to G$ such that $f \con id_G$ then $G$ is homotopy equivalent to $Im(f)$.  
\end{lemma}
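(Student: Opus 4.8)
The plan is to exhibit the corestriction of $f$ to its image as a homotopy equivalence, with homotopy inverse the inclusion of the image. First I would unpack the hypothesis: by Definition \ref{D:exp}, the edge $f \con \id_G$ in $G^G$ says exactly that whenever $y \con z \in E(G)$ we have $f(y) \con z \in E(G)$; equivalently $N(y) \subseteq N(f(y))$ for every vertex $y$. Writing $Im(f)$ for the induced subgraph of $G$ on the vertex set $\{f(x) : x \in V(G)\}$, I factor $f = \iota \circ \bar f$, where $\bar f : G \to Im(f)$ is $f$ with restricted codomain and $\iota : Im(f) \hookrightarrow G$ is the inclusion; both are graph morphisms, $\iota$ because it is a subgraph inclusion and $\bar f$ because $f$ already carries each edge of $G$ to an edge between image vertices.

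The two composites are then handled separately. The composite $\iota \circ \bar f$ equals $f$, and $f \simeq \id_G$ is precisely the hypothesis $f \con \id_G$ (a length $1$ homotopy), so one composite is homotopic to the identity immediately. For the other composite $\bar f \circ \iota = f|_{Im(f)} : Im(f) \to Im(f)$, I would show it is adjacent to $\id_{Im(f)}$ in the exponential $(Im(f))^{Im(f)}$: given an edge $y \con z$ of $Im(f)$, this is an edge of $G$ (as $Im(f)$ is induced), so the neighborhood condition gives $f(y) \con z$ in $G$; since $f(y)$ and $z$ both lie in $Im(f)$ and the subgraph is induced, this is an edge of $Im(f)$. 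By Observation \ref{O:morph} and Definition \ref{D:exp} this verifies $f|_{Im(f)} \con \id_{Im(f)}$, hence $\bar f \circ \iota \simeq \id_{Im(f)}$. Combining the two composites shows $\bar f$ and $\iota$ are mutually inverse up to homotopy, so $G \simeq Im(f)$.

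The step that genuinely needs care --- and the main obstacle --- is the claim that $Im(f)$ is an \emph{induced} subgraph of $G$, since the adjacency argument above silently uses that every $G$-edge between two image vertices is actually an edge of $Im(f)$. I would either prove this directly from $N(y) \subseteq N(f(y))$, or sidestep it using the Spider Lemma: by Proposition \ref{P:spider} the edge $f \con \id_G$ decomposes into spider moves, and one can instead fold away the vertices of $V(G) \setminus Im(f)$ one at a time. For each such $w$ we have $f(w) \neq w$ and $N(w) \subseteq N(f(w))$ with $f(w) \in Im(f)$; since deleting vertices only shrinks neighborhoods and the target $f(w)$ is never deleted, the inclusion $N(w) \subseteq N(f(w))$ persists, so Proposition \ref{P:fold2} makes each step a legitimate fold and Proposition \ref{P:fold} makes it a homotopy equivalence. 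Chaining these folds retracts $G$ onto the induced subgraph on $Im(f)$, giving $G \simeq Im(f)$ and simultaneously certifying that this is the correct image subgraph.
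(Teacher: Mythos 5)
Your proposal is correct and follows essentially the same route as the paper: both exhibit the inclusion $\iota$ and the corestriction of $f$ as mutual homotopy inverses, with $\iota\bar f = f \simeq \id_G$ immediate from the hypothesis and $\bar f\iota \con \id_{Im(f)}$ verified directly from the exponential edge condition. The only divergence is the point you flag yourself --- the paper implicitly reads $E(Im(f))$ as the set of images of edges of $G$ (so its adjacency check starts from a preimage edge $v'\con w'$), whereas you read $Im(f)$ as the induced subgraph; both readings make the argument go through, and your fold-based fallback via Propositions \ref{P:fold} and \ref{P:fold2} is a valid but unneeded alternative.
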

\begin{proof}
Let $\iota$ denote the inclusion map $Im(f) \to G$.  Then  $\iota f=f$ which is homotopic to $id_G$.  We need to show that $f \iota$ is homotopic to $\id_H$ where $H = Im(f)$.    Suppose that $v\con w \in E(H)$.   Then by our definition of $Im(f)$ in Definition \ref{D:im}, this edge is the image of an edge $v'\con w' \in E(G)$, where  $f(v')=v, f(w')=w$.    Since $f \con id_G$, we know that  $v'\con f(w') \in E(G)$ and  therefore $f(v')\con  f(f(w')) \in E(H)$.  So $v \con f \iota(w)$ whenever $v \con w$, and so $id_H \con f\iota$.     \end{proof}

\begin{theorem}[\cite{Docht1}, Proposition 6.6]\label{T:minrep}
If $G, H$ are finite stiff graphs which are homotopy equivalent, then $G$ and $H$ are isomorphic.   \end{theorem}
\begin{proof}

Suppose we have graph morphisms $f:G\to H, g:H\to G$ such that $g f\simeq \id_G $ and $ f g \simeq \id_{H}$.  Thus by Proposition \ref{P:spider} we have a sequence of maps $id_{G}, k_1, k_2, \dots, k_n = gf$ such that each successive pair is a spider pair.   So by  Lemma \ref{L:image} $Im(g f)$ is homotopy equivalent to $G$.  Since $G$ is stiff, it follows that $Im(g f)=G$.  Similarly $Im(f g)=H$, and $f,g$ are isomorphisms.

\end{proof}

This completes the proof of Theorem \ref{T:skel}.

\begin{obs}
Any graph which is not  stiff may be folded.  Thus, we obtain a homotopy equivalent graph by continuous applying folds as in Lemma \ref{L:esssurj}. A  consequence of Theorem \ref{T:minrep} is that one may apply these folds in any arbitrary fashion, and the resulting stiff graphs will be isomorphic.  See Figure 6.2 below.

\begin{figure}[h]
$$\begin{tikzpicture}
\draw[fill] (0,0) circle (2pt);
\draw[fill] (.5,.5) circle (2pt);
\draw[fill] (-.5,.5) circle (2pt);
\draw[fill] (0,1) circle (2pt);
\draw[fill] (0,1.707) circle (2pt);

\draw (0,1.707) -- (0,1);
\draw (-.5,.5) -- (0,1);
\draw (.5,.5) -- (0,1);
\draw (-.5,.5) -- (0,0);
\draw (.5,.5) -- (0,0);

\draw[ultra thick, red] (.5,.5) -- (0,0);

\end{tikzpicture}
\ \ 
\begin{tikzpicture}
\draw[fill] (0,0) circle (2pt);
\draw[fill] (.5,.5) circle (2pt);
\draw[fill] (-.5,.5) circle (2pt);
\draw[fill] (0,1) circle (2pt);

\draw (-.5,.5) -- (0,1);
\draw (.5,.5) -- (0,1);
\draw (-.5,.5) -- (0,0);
\draw (.5,.5) -- (0,0);

\draw[ultra thick, red] (.5,.5) -- (0,0);
\draw (0,0) -- (0,0);
\draw (.5,.5) -- (.5,.5);
\draw (-.5,.5) -- (-.5,.5);

\end{tikzpicture}
\ \ 
\begin{tikzpicture}
\draw[fill] (0,0) circle (2pt);
\draw[fill] (.5,.5) circle (2pt);
\draw[fill] (0,1) circle (2pt);

\draw (.5,.5) -- (0,1);
\draw (.5,.5) -- (0,0);

\draw[ultra thick, red] (.5,.5) -- (0,0);
\draw (0,0) -- (0,0);
\draw (.5,.5) -- (.5,.5);
\draw (-.5,.5) -- (-.5,.5);

\end{tikzpicture}
\ \ 
\begin{tikzpicture}
\draw[fill] (0,0) circle (2pt);
\draw[fill] (.5,.5) circle (2pt);

\draw (.5,.5) -- (0,0);

\draw[ultra thick, red] (.5,.5) -- (0,0);
\draw (0,0) -- (0,0);
\draw (.5,.5) -- (.5,.5);
\draw (-.5,.5) -- (-.5,.5);

\end{tikzpicture}$$

$$\begin{tikzpicture}
\draw[fill] (0,0) circle (2pt);
\draw[fill] (.5,.5) circle (2pt);
\draw[fill] (-.5,.5) circle (2pt);
\draw[fill] (0,1) circle (2pt);
\draw[fill] (0,1.707) circle (2pt);

\draw (0,1.707) -- (0,1);
\draw (-.5,.5) -- (0,1);
\draw (.5,.5) -- (0,1);
\draw (-.5,.5) -- (0,0);
\draw (.5,.5) -- (0,0);

\draw[ultra thick, blue] (0,1.707) -- (0,1);

\end{tikzpicture}
\ \ 
\begin{tikzpicture}
\draw[fill] (.5,.5) circle (2pt);
\draw[fill] (-.5,.5) circle (2pt);
\draw[fill] (0,1) circle (2pt);
\draw[fill] (0,1.707) circle (2pt);

\draw (0,1.707) -- (0,1);
\draw (-.5,.5) -- (0,1);
\draw (.5,.5) -- (0,1);

\draw[ultra thick, blue] (0,1.707) -- (0,1);
\draw (0,0) -- (0,0);
\draw (.5,.5) -- (.5,.5);
\draw (-.5,.5) -- (-.5,.5);

\end{tikzpicture}
\ \ 
\begin{tikzpicture}
\draw[fill] (.5,.5) circle (2pt);
\draw[fill] (0,1) circle (2pt);
\draw[fill] (0,1.707) circle (2pt);

\draw (0,1.707) -- (0,1);
\draw (.5,.5) -- (0,1);

\draw[ultra thick, blue] (0,1.707) -- (0,1);
\draw (0,0) -- (0,0);
\draw (.5,.5) -- (.5,.5);
\draw (-.5,.5) -- (-.5,.5);
\end{tikzpicture}
\ \ 
\begin{tikzpicture}
\draw[fill] (0,1) circle (2pt);
\draw[fill] (0,1.707) circle (2pt);

\draw (0,1.707) -- (0,1);

\draw[ultra thick, blue] (0,1.707) -- (0,1);
\draw (0,0) -- (0,0);
\draw (.5,.5) -- (.5,.5);
\draw (-.5,.5) -- (-.5,.5);
\end{tikzpicture}$$
\caption{Any series of folds of will eventually terminate with a subgraph isomorphic to $K_2$, although not necessarily the same subgraph.}
\end{figure}
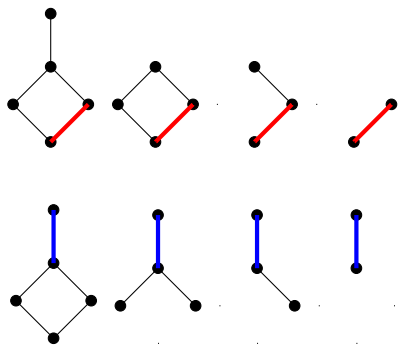

\end{obs}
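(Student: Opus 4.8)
The plan is to prove existence by induction on the number of vertices, using folds to shrink $G$ one vertex at a time, and then to prove uniqueness by exploiting the rigidity of stiff graphs recorded in Lemma \ref{L:nc}. Throughout I use Theorem \ref{T:minnerve} to freely interchange the words \emph{homotopy minimal} and \emph{stiff}, so the task becomes: every finite graph is homotopy equivalent to a stiff graph, and that stiff graph is unique up to isomorphism.

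For existence I would induct on $n = |V(G)|$. When $n = 1$ the only endomorphism is the identity and there are no proper subgraphs, so $G$ is already homotopy minimal and I set $M = G$. For $n > 1$, if $G$ is stiff I again take $M = G$; otherwise Definition \ref{D:nerve} supplies distinct vertices $v, w$ with $N(v) \subseteq N(w)$, and Proposition \ref{P:fold2} identifies the map fixing every vertex except $v$ and sending $v \mapsto w$ as a fold. By Proposition \ref{P:fold} this fold is a homotopy equivalence onto its image $G' = G \setminus \{v\}$, a proper subgraph on $n-1$ vertices. The inductive hypothesis gives a stiff $M$ with $G' \simeq M$, and transitivity of graph homotopy equivalence (itself immediate from Lemma \ref{L:comp3} together with Observation \ref{O:htpyequiv}) upgrades this to $G \simeq M$.

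For uniqueness, suppose $M$ and $M'$ are both stiff and both homotopy equivalent to $G$, so that $M \simeq M'$; choose morphisms $f \colon M \to M'$ and $g \colon M' \to M$ with $gf \simeq \id_M$ and $fg \simeq \id_{M'}$. This is the one place stiffness earns its keep: since $M$ is stiff, Lemma \ref{L:nc} says the identity of $M$ is homotopic to no other endomorphism, so $gf \simeq \id_M$ forces $gf = \id_M$ exactly, and symmetrically $fg = \id_{M'}$. Hence $f$ and $g$ are mutually inverse graph morphisms, i.e.\ isomorphisms, and $M \cong M'$.

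I expect the uniqueness step to be the conceptual heart of the argument, and the key decision is to avoid tracking the images of iterated folds and instead promote the homotopy $gf \simeq \id_M$ to a genuine equality directly through the nerve condition of Lemma \ref{L:nc}; this is a noticeably shorter route than reasoning about $\Img(gf)$. The only remaining care is bookkeeping: verifying that homotopy equivalence of graphs is in fact transitive, so the chain $G \simeq G' \simeq M$ is legitimate, and confirming that each fold in the inductive step strictly lowers the vertex count so the induction is well-founded.
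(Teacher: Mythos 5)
Your argument is correct, and it establishes exactly the content underlying this observation, namely Theorem \ref{T:minrep} together with the fact that a non-stiff graph admits a fold onto a graph with one fewer vertex. The existence half is the same induction the paper runs in the proof of Theorem \ref{T:minrep}; the interesting divergence is in the uniqueness half. The paper gets uniqueness by decomposing the homotopy $gf \simeq \id_M$ into spider moves via Proposition \ref{P:spider}, invoking Lemma \ref{L:image} to conclude that $\Img(gf)$ is homotopy equivalent to $M$, and then using minimality to force $\Img(gf) = M$ (and similarly for $fg$) before concluding that $f$ and $g$ are isomorphisms. You instead go straight through Lemma \ref{L:nc}: stiffness of $M$ rigidifies the identity, so $gf \simeq \id_M$ is promoted to the equality $gf = \id_M$, and symmetrically $fg = \id_{M'}$, whence $f$ and $g$ are mutually inverse morphisms and hence isomorphisms with no further argument. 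This is in effect the same device the paper deploys in the proof of Theorem \ref{T:minnerve}, redirected at the uniqueness statement; it buys you a shorter and tighter endgame, since the paper's route still has to pass from surjectivity of $gf$ and $fg$ onto $M$ and $M'$ to the conclusion that $f$ and $g$ are isomorphisms, whereas exact invertibility hands you that immediately. The cost is that your route leans on Theorem \ref{T:minnerve} to translate between \emph{homotopy minimal} and \emph{stiff}, but that equivalence is already proved earlier in the paper, so nothing circular occurs. Your bookkeeping points (transitivity of homotopy equivalence via Lemma \ref{L:comp3} and Observation \ref{O:htpyequiv}, and the strict decrease in vertex count at each fold) are exactly the right things to check and both go through.
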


\section*{Acknowledgements}
The authors are grateful to Dr.\ Demitri Plessas for his previous forays into categorical graph theory, and for his feedback.  We also want to thank Dr.\ Jeffery Johnson for helping us with some of the terminology in this paper.  Lastly, we want to thank the referees who pointed us in the direction of the relevant literature.  

\bibliographystyle{spmpsci}      
\bibliography{HCG}   

\end{document}